\newcommand{\ds}{\displaystyle }
\newcommand{\vc}[1]{{\boldsymbol #1}} 
\newcommand{\vcn}[1]{{\bf #1}}
\newcommand{\sr}[1]{{\mathcal #1}}
\newcommand{\dd}[1]{\mathbb{#1}}
\newcommand{\br}[1]{\langle #1 \rangle}
\newcommand{\ol}{\overline}
\newcommand{\all}{{\rm all}}
\newcommand{\eq}[1]{(\ref{eq:#1})}
\newcommand{\lem}[1]{Lemma~\ref{lem:#1}}
\newcommand{\thr}[1]{Theorem~\ref{thr:#1}}
\newcommand{\rem}[1]{Remark~\ref{rem:#1}}
\newcommand{\sectn}[1]{Section~\ref{sec:#1}}
\newcommand{\sect}[1]{\ref{sec:#1}}
\newcommand{\pend}{\hfill \thicklines \framebox(6.6,6.6)[l]{}}
\newenvironment{proof*}[1]{\noindent {\sc  #1} \rm}{\pend}
\newtheorem{theorem}{Theorem}[section]
\newtheorem{lemma}{Lemma}[section]
\newtheorem{remark}{Remark}[section]
\newtheorem{corollary}{Corollary}[section]
\newtheorem{definition}{Definition}[section]
\newenvironment{mylist}[1]{\begin{list}{}
{\setlength{\itemindent}{#1mm}}
{\setlength{\itemsep}{0ex plus 0.2ex}}
{\setlength{\parsep}{0.5ex plus 0.2ex}}
{\setlength{\labelwidth}{10mm}}
}{\end{list}}
\newcommand{\setnewcounter} {
\setcounter{subsection}{0}
\setcounter{equation}{0}
\setcounter{conjecture}{0}
\setcounter{assumption}{0}
\setcounter{question}{0}
\setcounter{definition}{0}
\setcounter{theorem}{0}
\setcounter{corollary}{0}
\setcounter{lemma}{0}
\setcounter{proposition}{0}
\setcounter{remark}{0}
}
\begin{document}
 \title{\Large \bf Palm problems arising in BAR approach\\ and its applications}

\author{Masakiyo Miyazawa\footnotemark}
\date{\today}

\maketitle

\begin{abstract}

We consider Palm distributions arising in a Markov process with time homogeneous transitions which is jointly stationary with multiple point processes. Motivated by a BAR approach studied in the recent paper \cite{BravDaiMiya2023}, we are interested in two problems; when this Markov process inherits the same Markov structure under the Palm distributions, and how the state changes at counting instants of the point processes can be handled to derive stationary equations when there are simultaneous counts and each of them influences the state changes. We affirmatively answer the first problem, and propose a framework for resolving the second problem, which is applicable to a general stationary process, which is not needed to be Markov. We also discuss how those results can be applied in deriving BAR's for the diffusion approximation of queueing models in heavy traffic. In particular, as their new application, the heavy traffic limit of the stationary distribution is derived for a single server queue with a finite waiting room.
\end{abstract}

\keyword{Keywords:}{Palm distribution, Markov process, point process, basic adjoint relationship, stationary distribution, heavy traffic approximation, generalized Jackson network, finite queue}

\footnotetext[1]{Department of Information Sciences,
Tokyo University of Science, Noda, Chiba, Japan.}

\section{Introduction}
\label{sec:introduction}
\setnewcounter

We mainly consider a Markov process with time homogeneous transitions which is jointly stationary with multiple point processes. If these point processes have finite intensities, we can define Palm distributions concerning them (e.g., see \cite{BaccBrem2003,Miya2010}). In the recent paper \cite{BravDaiMiya2023}, these Palm distributions are used as one of the key tools for deriving a diffusion approximation in heavy traffic for the stationary distribution of a multi-class queueing network with static buffer priorities (SBP) which has generally distributed exogenous inter-arrival and service times. In particular, the Palm distributions are used to derive stationary equations, which are called basic adjoint relationship, BAR for short, whose asymptotic versions are called asymptotic BAR's. Those BAR's enable the diffusion approximation of queueing networks in heavy traffic. This method coins a BAR approach. In those derivations of BAR's, the following two facts are used; (1st) the dynamics at the jump instants of the process is unchanged under Palm distributions, and (2nd) simultaneous exogenous-arrivals and/or service completions do not influence the BAR. (1st) is proved for a marginal type of the Palm distribution in \cite{BravDaiMiya2023}, while, for (2nd), it is suggested to decompose the state change due to those simultaneous events into a sequence of intermediate states in \cite{BravDaiMiya2017,BravDaiMiya2023} (see also the detailed Palm distribution in \cite{Miya2010}) .

These facts motivate us to consider them in a more general context than those studied in \cite{BravDaiMiya2023}, that is, for a general Markov process jointly stationary with multiple point processes in which all the jump instants of the Markov process are counted by those point processes. In this general framework, we have two questions; (Q1) what kind of Markov properties are inherited when the probability law is changed to the Palm distribution, and (Q2) how can the state changes of the process be handled in deriving BAR's when point processes have simultaneous counts and each of them change the state of the Markov process ? We refer to (Q1) and (Q2) as Palm problems. 

We affirmatively answer (Q1) by \thr{Palm 1}, and propose a framework for resolving problems in (Q2). Namely, for (Q1), we show that, under the Palm distributions, the strong Markov property is preserved with the same transition operator while the jump kernel at state changes at counting instants of the point processes is unchanged if the counting instants are predictable stopping times. For (Q2), we show the proposed framework works well, for which the process is not needed to be Markov, namely, it is applicable to a general stationary process. We then discuss how those results can be applied to derive the BAR's.

We consider two examples to see how the solutions for (Q1) and (Q2) work. The first example is a generalized Jackson network, studied in \cite{BravDaiMiya2017}. We derive the BAR for this network under the proposed framework. In this example, we focus on how (Q2) is resolved. The second example is a single server queue with a finite waiting room. We derive the limit of its stationary distributions in diffusion scaling under heavy traffic conditions. This limit is identical with that of the corresponding Brownian model (\cite{Harr2013}). This example demonstrates the power of the BAR approach. A further application can be found in \cite{Miya2024}.

This paper makes up four sections. In \sectn{modeling}, we first give a framework for a general continuous-time process and point processes which are jointly stationary motivated by a queueing network, which answers (Q2). Then, we introduce a framework for a Markov process. In \sectn{main}, (Q1) is answered, and the BAR is derived under the proposed framework for a piecewise deterministic Markov process. Finally, in \sectn{application}, the BAR approach is applied to a generalized Jackson network and a single server queue with a finite waiting room.

\section{Modeling assumptions and BAR approach}
\label{sec:modeling}
\setnewcounter

In this section, we introduce a general framework for deriving the BAR. We start with an example, which motivates to propose the general framework.

\subsection{Motivated example}
\label{sec:motivated}

Consider a generalized Jackson network with $d$ stations, $GJ$-network for short. This is the queueing network which has $d$ service stations for positive integer $d$. Those stations are indexed by $1,2,\ldots,d$, and let $J_{d} = \{1,2,\ldots.d\}$. Let $K_{e}$ be the index set of the stations which have exogenous arrivals. $K_{e}$ is a subset of $J_{d}$, but need not be $J_{d}$. Namely, there may be stations which do not have exogenous arrivals.  At each station, customers are served in the first-come first-served manner by a single server.

The inter-arrival times of exogenous customers are assumed to be $i.i.d.$, and the service times at each station do so. Furthermore, they are independent, and positive $a.s.$ (almost surely). After service completions at station $i$, customers are independently routed to station $i'$ with probability $p_{i,i'}$ or leave the network otherwise. Let $P \equiv \{p_{i, i'}; i, i'\in J_{d}\}$, which is called a routing matrix. For this network to be a non-explosive open network, we assume that $(I-P)^{-1} < \infty$ for the routing matrix $P$.

As is well known, this $GJ$-network can be described by a continuous-time Markov process. For this description, let $L_{i}(t)$ be the queue length (including customers being served) at station $i \in J_{d}$ at time $t \ge 0$, and define a queue length process of the network by $L(\cdot) \equiv \{(L_{1}(t), L_{2}(t),\ldots,L_{d}(t)); t \ge 0\}$ with state space $\dd{Z}_{+}^{d}$, where $\dd{Z}_{+} = \{0,1,\ldots\}$. For $t \ge 0$, let $R_{j}(t)$ be the remaining arrival time of an exogenous customer at station $j \in K_{e}$. Similarly, let $R_{d+j}(t)$ be the remaining service time at station $j \in J_{d}$. We here conventionally define $R_{d+j}(t) =$ the service time of the next customer (even if no customer there) whenever service is completed at station $j$ at time $t$. This convention does not change $L(\cdot)$, and is important for the heavy traffic approximations in \cite{BravDaiMiya2017,BravDaiMiya2023}. However, for a newly started service time to be independent of the past history of the system, we have to replace the remaining service time by a newly and independently sampled service time when the service starts. This is needed to well keep the Markov property of $X(\cdot)$ defined below under the convention, and only possible for the service times which are independent of everything else at each station when they start to be processed.

For notational convenience, we put $R_{j}(t) \equiv 0$ for $j \in J_{d} \setminus K_{e}$. Then, $R_{j}(t)$ is defined for all $j \in K \equiv \{1,2,\ldots,2d\}$. Let $R(t)$ be the $2d$-dimensional vector whose $j$-th entry is $R_{j}(t)$. Define continuous-time process $X(\cdot) \equiv \{X(t); t \ge 0\}$ by
\begin{align*}
  X(t) = (L(t),R(t)) \in S \equiv \dd{Z}_{+}^{d} \times \dd{R}_{+}^{2d}.
\end{align*}
where $\dd{R}_{+} = [0,\infty)$. It is easy to see that $X(\cdot)$ is a Markov process with state space $S$. As usual, we can assume without loss of generality that $X(t)$ is right continuous with left-had-limits at all $t \ge 0$ for each fixed sample. We denote its left-hand-limit at time $t$ by $X(t-)$. 

We next introduce counting processes. Let $K_{s} = \{d+1,d+2,\ldots,2d\}$, and let $K_{+} = K_{e} \cup K_{s}$. Taking the convention on the remaining service times into account, we define counting process $N_{j}(\cdot) \equiv \{N_{j}(t); t \ge 0\}$ for $j \in K$ by
\begin{align}
\label{eq:Nj}
  N_{j}(t) = \sum_{s \in (0,t]} 1(R_{j}(s) > R_{j}(s-) = 0), \qquad & j \in K, t \ge 0,   
\end{align}
where $1(\cdot)$ is the indicator function of proposition ``$\cdot$''. Note that $N_{j}(t) \equiv 0$ for $j \in K \setminus K_{+} = J_{d} \setminus K_{e}$, which is called null, and that $N_{j}(t)$ for $j \in K_{+}$ is finite for each $t \ge 0$, that is, $N_{j}(\cdot)$ is locally finite, as long as finitely many customers arrive at station $j$ in a finite time interval. Furthermore, $N_{j}(t)$ is right continuous because $R_{j}(t)$ is right continuous. Obviously, all the discontinuous instants of $X(\cdot)$ are counted by some $N_{j}(\cdot)$.

A counting process can be viewed as a point process, which is a nonnegative integer-valued random measure on $(\dd{R}_{+},\sr{B}(\dd{R}_{+}))$ in which $\sr{B}(\dd{R}_{+})$ is the Borel field on $\dd{R}_{+}$. For the counting process $N_{j}(\cdot)$, define the point process $N_{j}$ through
\begin{align}
\label{eq:correspond 1}
  N_{j}((0,t]) = N_{j}(t), \qquad t > 0,
\end{align}
where $N_{j}(B)$ is the measure of $B \in \sr{B}(\dd{R}_{+})$. Obviously, the counting process $N_{j}(\cdot)$ is also determined by point process $N_{j}$, which is also called null for $j \in K \setminus K_{+}$.

Thus, the point process $N_{j}$ and counting process $N_{j}(\cdot)$ are different expressions of the same stochastic process. It will be advantageous for us to use both of these expressions. Note that the sample paths of $N_{j}(\cdot)$ are right continuous with left-hand-limits because $X(\cdot)$ has those properties. Note that $N_{j}$ is a simple point process, that is, $\Delta N_{j}(t) \equiv N_{j}(t) - N_{j}(t-) \le 1$ for $t \ge 0$ because the counting intervals are almost surely positive.

Then, we can construct a stochastic basis $(\Omega,\sr{F},\dd{F},\dd{P})$, where $\dd{F} \equiv \{\sr{F}_{t}; t \ge 0\}$ is a filtration on $\Omega$, and a shift operator semi-group $\Theta_{\bullet} \equiv \{\Theta_{t}; t \ge 0\}$ on it, namely, $\Theta_{0}(\omega) = \omega$ and $\Theta_{s+t}(\omega) = \Theta_{s} \circ \Theta_{t}(\omega) \equiv \Theta_{s}(\Theta_{t}(\omega))$ for $\omega \in \Omega$ and $s,t \ge 0$, such that
\begin{enumerate}[({2.}a)]
\item $X(\cdot)$ and $N_{j}(\cdot)$ for $j \in K$ are $\dd{F}$-adapted, that is, $X(t)$ and $N_{j}(t)$ are $\sr{F}_{t}$-measurable for $t \ge 0$, and their sample paths are right continuous with left-had-limits.
\item $\Theta_{\bullet}$ is $\sr{B}(\dd{R}_{+} \otimes \sr{F})/\sr{F}$ measurable, that is, $\{(t,\omega) \in \dd{R}_{+} \times \Omega; \Theta_{t}(\omega) \in A\} \in \sr{B}(\dd{R}_{+}) \otimes \sr{F}$ for each $A \in \sr{F}$, where
\begin{align*}
  \sr{B}(\dd{R}_{+}) \otimes \sr{F} = \sigma(\{B \times A; B \in \sr{B}(\dd{R}_{+}), A \in \sr{F}\}),
\end{align*}
in which $\sigma(\sr{A})$ denotes the minimal $\sigma$-field containing $\sr{A}$,
\item $\dd{F}$, $X(\cdot)$ and $N_{j}(\cdot)$ for $j \in K$ are consistent with $\Theta_{\bullet}$, namely,
\begin{align}
\label{eq:measurable-shift 1}
 & A \in \sr{F}_{t} \; \mbox{ implies } \; \Theta_{s}^{-1} A \in \sr{F}_{s+t}, \qquad s,t \ge 0,\\
\label{eq:X-consistent 1}
 & X(t) \circ \Theta_{s}(\omega) = X(s+t)(\omega), \qquad s,t \ge 0, \omega \in \Omega,\\
\label{eq:N-consistent 1}
 & N_{j}(t) \circ \Theta_{s}(\omega) = N_{j}(s+t)(\omega) - N_{j}(s)(\omega), \qquad s,t \ge 0, \omega \in \Omega,
\end{align}
where, for random variable $W$ and $A \in \sr{F}$,
\begin{align*}
  W \circ \Theta_{s}(\omega) = W (\Theta_{s}(\omega)), \qquad \Theta_{s}^{-1} A = \{\omega \in \Omega; \Theta_{s}(\omega) \in A\}.
\end{align*}
\end{enumerate}

Let $T_{e,i}$ for $i \in K_{e}$ and $T_{s,j}$ for $j \in J_{d}$, be random variables which represent the inter-arrival and service times at stations $i$ and $j$, respectively. Let $\vc{\alpha} \equiv (\alpha_{1}, \alpha_{2}, \ldots, \alpha_{d})$ be the solution of the traffic equation:
\begin{align}
\label{eq:GJN-traffic 1}
  \alpha_{j} = \lambda_{j} + \sum_{i=1}^{d} \alpha_{i} p_{i,j}, \qquad j \in J_{d}.
\end{align}
Because of the condition $(I-P)^{-1} < \infty$, $\vc{\alpha}$ uniquely exists and is finite. Define the traffic intensities $\rho_{i} \equiv \alpha_{i}/\mu_{i}$, where $\lambda_{i} = 1/\dd{E}(T_{e,i})$ for $i \in K_{e}$, $\lambda_{i} = 0$ for $i \in J_{d} \setminus K_{e}$, and $\mu_{i} = 1/\dd{E}(T_{s,i})$ for $i \in J_{d}$. Then, it is well known for the $GJ$-network that $X(\cdot)$ has a unique stationary distribution if \begin{enumerate}[(2.d)]
\item $\rho_{i} < 1$ for all $i \in J_{d}$ and the distribution of $T_{e,i}$ is spread out for all for $i \in K_{e}$.
\end{enumerate}
 If we take this stationary distribution as the distribution of the initial state $X(0)$, then Markov process $X(\cdot)$ and point process $N_{j}$ for $j \in K$ are jointly stationary processes, where a point process is said to be stationary if its distribution is unchanged by time shift. Since \eq{N-consistent 1} implies
\begin{align*}
  N_{j}(B) \circ \Theta_{s}(\omega) = N_{j}(B+s), \qquad B \in \sr{B}(\dd{R}_{+}),
\end{align*}
where $B+s = \{u \ge s; u-s \in B\}$. $N_{j}$ is indeed a stationary point process under the probability measure for which $X(\cdot)$ is a stationary process. This is the typical framework on which we work in this paper.

\subsection{Framework for stationary processes}
\label{sec:stationary}

We aim to consider a Markov process, but (Q2) can be answered for more general processes. Because of this, we start with a general continuous-time process $X(\cdot) \equiv \{X(t); t \ge  0\}$ with state space $S$ and general point processes $N_{j}$ for $j \in {K} \equiv \{1,2,\ldots,k\}$, where $S$ is a $d+k$ dimensional real vector space with Euclidean norm for positive integers $d$ and $k$. Denote the Borel $\sigma$-field on $S$ by $\sr{B}(S)$. The index set of the non-null point processes is denoted by $K_{+} \subset K$. Here, $K$ can be reduced to $K_{+}$, but it is convenient to keep both of them for describing the dynamics of $X(\cdot)$. Let $N_{j}(\cdot) \equiv \{N_{j}(t); t \ge 0\}$ be the counting process determined by point process $N_{j}$ through \eq{correspond 1}.

Taking the framework for the $GJ$-network process into account, we further assume that there is a stochastic basis $(\Omega,\sr{F},\dd{F},\dd{P})$ and a shift operator semi-group $\Theta_{\bullet} \equiv \{\Theta_{t}; t \ge 0\}$ satisfying the following conditions.
\begin{mylist}{3}
\item [(M1)] The conditions (2.a), (2.b) and (2.c) of \sectn{motivated} hold.
\item [(M2)] For each $j \in K_{+}$, $N_{j}$ is a locally finite simple point process, and, for $j \in K \setminus K_{+}$,  $N_{j}$ is a null point process.
\end{mylist}

For the counting processes $N_{j}(\cdot)$'s, define $N_{\rm{all}}(\cdot) \equiv \{N_{\rm{all}}(t); t \ge 0\}$ by
\begin{align*}
  N_{\rm{all}}(t) = \sum_{j \in {K}} N_{j}(t), \qquad t \ge 0.
\end{align*}
Let $N_{\rm{all}}$ be the point process determined by the counting process $N_{\rm{all}}(\cdot)$. Note that $N_{\rm{all}}$ may not be simple. We next generate a simple point process $N_{0}$ from $N_{\rm{all}}$. To this end, let $t_{0} = 0$, and let $t_{n}$ be the $n$-th jump instant of $N_{\all}(\cdot)$ for $n \ge 1$, then define the counting process $N_{0}(\cdot) \equiv \{N_{0}(t); t \ge 0\}$ by
\begin{align*}
  N_{0}(t) = \sum_{n=1}^{\infty} 1(t_{n} \le t), \qquad t \ge 0.
\end{align*}
and let $N_{0}$ be the point process determined by the counting process $N_{0}(\cdot)$. Obviously, $N_{0}$ is a simple point process.

\begin{mylist}{3}
\item [(M3)] All the discontinuous instants of $X(\cdot)$ are counted by $N_{0}(\cdot)$.
\end{mylist}

To answer (Q2), we next consider the state changes at the discontinuous instants $t_{n}$'s of $X(\cdot)$ caused by the simultaneous counts of the $N_{j}(\cdot)$'s. Since $t_{n}$'s are stopping times, we need a $\sigma$-fields up to a stopping time to describe such state changes. For this, we introduce the following two $\sigma$-fields for a stopping time $\tau$.
\begin{align*}
 & \sr{F}_{\tau} = \{A \in \sr{F}; A \cap \{\tau \le t\} \in \sr{F}_{t}, \forall t \ge 0\},\\
 & \sr{F}_{\tau-} = \sigma(\sr{F}_{0} \cup \{A \cap \{t < \tau\}; A \in \sr{F}_{t}, t \ge 0\}).
\end{align*}

We are now ready to propose the following decomposition for the state changes of $X(\cdot)$ at instants $t_{n}$'s, which is answer to (Q2).
\begin{mylist}{3}
\item [(M4)] There are $S$-valued random variables $Y_{j,n}$ for $j \in \{0\} \cup K$ and  $n \ge 0$ satisfying the following three conditions, where recall that $t_{0} = 0$: For $n \ge 0$,
\end{mylist}
\begin{mylist}{6}
\item [(M4.a)] $Y_{0,n} = X(t_{n}-)$, $Y_{k,n} = X(t_{n})$, and, for $j \in K$, $Y_{j,n} = Y_{j-1,n}$ if $\Delta N_{j}(t_{n}) = 0$.
\item [(M4.b)] For $j \in K$, $Y_{j,n}$ is a $\sr{F}_{t_{n}}$-measurable function of $Y_{j-1,n}$.
\item [(M4.c)] $Y_{j,n}$ is consistent with $\{\Theta_{t_{n}}; n \ge 0\}$, that is, $Y_{j,n} \circ \Theta_{t_{m}}= Y_{j,m+n}$ for $m \ge 0$.
\end{mylist}

We call $Y_{j,n}$ an intermediate state for $j \in K \setminus \{k\}$. We will see how (M4) works well in \lem{Palm 4}. Note that $X(t_{n}-)$ is $\sr{F}_{t_{n}-}$-measurable. This $\sigma$-field has a important role to describe the state change  from $X(t_{n}-)$ to $X(t_{n})$ in Sections \sect{Markov} and \sect{main}. This is the reason why not only $\sr{F}_{\tau}$ but also $\sr{F}_{\tau-}$ are introduced.

\begin{remark}\rm
\label{rem:M4}
In some queueing applications, assigning indexes to the point processes may be important for (M4) to be satisfied. For example, consider the single server queue in which customers are served autonomously in batches, then the queue length may be differently changed depending on the order of the arrival and service completion when they occur simultaneously. However, this is not the case for our applications because (M4) is satisfied for any their order (see \sectn{GJ-network}).
\pend
\end{remark}

We now make a key distributional assumption:
\begin{mylist}{3}
\item [(D1)] The time-shift operator $\{\Theta_{t}; t \ge 0\}$ on $(\Omega,\sr{F},\ol{\dd{F}},\dd{P})$  satisfies
\begin{align}
\label{eq:stationary 1}
 & \dd{P}(A) = \dd{P}(\Theta_{t}^{-1} A), \qquad A \in \sr{F}_{\infty}, t \ge 0.
\end{align}
\end{mylist}

\begin{remark}\rm
\label{rem:X0-}
The assumption (D1) is essentially equivalent to assuming that $X(\cdot)$ and $N_{j}$ for $j \in {K}$ are jointly stationary. From  this stationarity, $X(\cdot)$ can start at any time $s < 0$. To see this, for fixed $s < 0$, define $X_{s}(t) = X(t-s)$ for $t \ge s$, then
\begin{align*}
  X_{s}(t) \circ \Theta_{u} = X(t-s) \circ \Theta_{u} = X(t+u-s) = X_{s}(t+u), \qquad t \ge s, u \ge 0,
\end{align*}
and the probability laws of $\{X_{s}(t); t \ge s\}$ and $\{X_{s}(t); t \ge 0\}$ are identical with that of $X(\cdot)$. Hence, we can shift the starting time of $X(\cdot)$ from $0$ to $s < 0$. Thus, $X(0-)$ is well defined, and
\begin{align}
\label{eq:X0-}
  X(0-) \circ \Theta_{t_{m}} = X(t_{m}-), \qquad m \ge 1.
\end{align}
This $X(0-)$ will be used under Palm distributions.
\pend
\end{remark}

As for the point processes $N_{j}$ for $j \in K$, it is a stationary point process by (D1). We denote its intensity $\dd{E}[N_{j}(1)]$ by $\alpha_{j}$, then $\alpha_{j} = 0$ for $j \in K \setminus K_{+}$. For the non-null point processes, we assume:
\begin{mylist}{3}
\item [(D2)] $\quad 0 < \alpha_{j} < \infty$ for $j \in K_{+}$.
\end{mylist}

For stationary point process $N_{j}$ for $j \in K_{+}$, we define Palm distributions on $(\Omega,\sr{F})$. Let $t_{j,m}$ be the $m$-th be the counting time of $N_{j}(\cdot)$ for $j \in K_{+}$. By the definition of $N_{0}$, there is a unique $n$ such that $t_{j,m} = t_{n}$. From the stationary assumption, $0 < t_{j,m}< \infty$ for $m \ge 1$ and $t_{j,m} \uparrow \infty$ as $m \to \infty$. Since $N_{j}(\cdot)$ is adapted to $\dd{F}$, $t_{j,m}$ is a stopping time with respect to the filtration $\dd{F}$. 
Define the Palm distribution $\dd{P}_{j}$ on $(\Omega,\sr{F})$ for $j \in K_{+}$ as
\begin{align}
\label{eq:Palm 1}
  \dd{P}_{j}(A) = \alpha_{j}^{-1} \dd{E}\left[\sum_{m=1}^{\infty} 1_{A} \circ \Theta_{t_{j,m}} 1(t_{j,m} \le 1) \right], \qquad A \in \sr{F}_{\infty}.
\end{align}
It is easy to see that $\dd{P}_{j}(\Delta N_{j}(0) = 1) = 1$ because $\Theta_{t_{j,m}}^{-1} \{\Delta N_{j}(0) =1\} = \{\Delta N_{j}(t_{j,m})=1\}$. 

Recall the simple point process $N_{0}$ which has the same counting time as $N_{\rm{all}}$. Let $\alpha_{0} = \dd{E}[N_{0}(1)]$, then $\alpha_{0} \le \dd{E}[N_\all(1)] = \sum_{j \in K_{+}} \alpha_{j}$. Similarly to \eq{Palm 1} we define the Palm distribution concerning $N_{0}$ as
\begin{align}
\label{eq:Palm 2}
  \dd{P}_{0}(A) = \alpha_{0}^{-1} \dd{E}\left[\sum_{n=1}^{\infty} 1_{A} \circ \Theta_{t_{n}} 1(t_{n} \le 1) \right], \qquad A \in \sr{F}.
\end{align}

Note that this Palm distribution may be different from that of $N_{\rm{all}}$. Namely,
\begin{align}
\label{eq:Palm standard}
  \dd{P}_{\rm{all}}(A) = \alpha_\all^{-1} \dd{E}\left[\int_{0}^{1}  1_{A} \circ \Theta_{t} N_{\rm{all}}(dt)\right], \qquad A \in \sr{F}.
\end{align}

Define $\Delta h(t) = h(t) - h(t-)$ for the function $h$ from $\dd{R}_{+}$ to $\dd{R}$ which is right continuous with left-hand-limits. Let $f(X)(t) = f(X(t))$, then $\Delta f(X)(t) = f(X(t)) - f(X(t-))$, which is well defined for $t=0$ by \rem{X0-}. Define 
\begin{align*}
  \Delta_{j} f(X)(t_{n}) = f(Y_{j,n}) - f(Y_{j-1,n}), \qquad j \in K, n \ge 1, f \in C_{b}(S).
\end{align*}

We note the following fact, which will be used together with \thr{Palm 1} in applications.
\begin{lemma}\rm
\label{lem:Palm 4}
Under the assumptions (M1)--(M4), (D1) and (D2),
\begin{align}
\label{eq:Palm 4}
& \dd{E}_{0}\left[\Delta_{j} f(X)(0) | \Delta N_{j}(0) = 1\right] = \dd{E}_{j} \left[ \Delta_{j} f(X)(0)\right], \qquad j \in K_{+},\\
\label{eq:Palm 5}
& \alpha_{0} \dd{E}_{0}\left[\Delta f(X)(0) \right] = \sum_{j \in K_{+}} \alpha_{j} \dd{E}_{j} \left[ \Delta_{j} f(X)(0)\right],
\end{align}
where $\dd{E}_{j}$ represents the expectation under $\dd{P}_{j}$ for $j \in \{0\} \cup K_{+}$. 
\end{lemma}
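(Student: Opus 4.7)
The plan is to reduce both identities to a single Campbell-type exchange formula between the Palm distribution $\dd{P}_{0}$ of the ``thinned superposition'' $N_{0}$ and the individual Palm distributions $\dd{P}_{j}$. Concretely, I expect to prove
\begin{align*}
  \alpha_{0}\, \dd{E}_{0}[\, g \cdot 1(\Delta N_{j}(0) = 1)\, ] \;=\; \alpha_{j}\, \dd{E}_{j}[g], \qquad j \in K_{+},
\end{align*}
for any bounded $\sr{F}$-measurable $g$. This is a direct consequence of the definitions \eq{Palm 1} and \eq{Palm 2}: since each counting instant $t_{j,m}$ of $N_{j}$ coincides with exactly one $t_{n}$, one can rewrite
\begin{align*}
  \sum_{m=1}^{\infty} g \circ \Theta_{t_{j,m}} 1(t_{j,m} \le 1) = \sum_{n=1}^{\infty} g \circ \Theta_{t_{n}} 1(\Delta N_{j}(t_{n}) = 1) 1(t_{n} \le 1),
\end{align*}
and then invoke consistency \eq{N-consistent 1} (extended to $t=0-$ via \rem{X0-}) to identify $1(\Delta N_{j}(t_{n}) = 1) = 1(\Delta N_{j}(0) = 1) \circ \Theta_{t_{n}}$, from which the exchange formula falls out after taking expectations.

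Next I would deduce \eq{Palm 4}. By (M4.a), $\Delta N_{j}(0) = 0$ forces $Y_{j,0} = Y_{j-1,0}$, hence $\Delta_{j} f(X)(0) = 0$ off the event $\{\Delta N_{j}(0) = 1\}$. Applying the exchange formula with $g = \Delta_{j} f(X)(0)$ then gives
\begin{align*}
  \alpha_{0}\, \dd{E}_{0}[\Delta_{j} f(X)(0)] = \alpha_{j}\, \dd{E}_{j}[\Delta_{j} f(X)(0)].
\end{align*}
Using the same exchange formula with $g \equiv 1$ yields $\dd{P}_{0}(\Delta N_{j}(0) = 1) = \alpha_{j}/\alpha_{0}$, and the definition of conditional expectation finishes \eq{Palm 4}.

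For \eq{Palm 5}, the key ingredient is the telescoping decomposition guaranteed by (M4.a): since $Y_{0,0} = X(0-)$ and $Y_{k,0} = X(0)$,
\begin{align*}
  \Delta f(X)(0) = f(X(0)) - f(X(0-)) = \sum_{j=1}^{k} \bigl( f(Y_{j,0}) - f(Y_{j-1,0}) \bigr) = \sum_{j \in K} \Delta_{j} f(X)(0).
\end{align*}
For $j \in K \setminus K_{+}$, $N_{j}$ is null so $\Delta N_{j}(0) = 0$ almost surely under $\dd{P}_{0}$ (as $\dd{P}_{0}$ is absolutely continuous with respect to $\dd{P}$ on the $\sigma$-field carrying $\Delta N_{j}(0)$), which again via (M4.a) kills the corresponding summand. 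Taking $\alpha_{0}\, \dd{E}_{0}[\cdot]$ of both sides and inserting the per-$j$ identity from the previous paragraph yields \eq{Palm 5}.

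I expect the only delicate point to be the identification $\Delta N_{j}(0) \circ \Theta_{t_{n}} = \Delta N_{j}(t_{n})$, because \eq{N-consistent 1} is stated for $t \ge 0$ but $\Delta N_{j}(0)$ involves the left-limit $N_{j}(0-)$; this is the reason to invoke the two-sided extension discussed in \rem{X0-}, and everything else is bookkeeping. The rest is combinatorics of assigning jumps of $N_{0}$ to the $N_{j}$'s.
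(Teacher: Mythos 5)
Your proof is correct and follows essentially the same route as the paper. The paper works directly with the specific random variable $\Delta_j f(X)(\cdot)$ and establishes the identity
\begin{align*}
\sum_{n=1}^{\infty} \Delta_{j} f(X)(t_{n}) 1(\Delta N_{j}(t_{n}) = 1) 1(t_{n} \le 1) = \sum_{m=1}^{\infty} \Delta_{j} f(X)(t_{j,m}) 1(t_{j,m} \le 1),
\end{align*}
then takes expectations and invokes \eq{Palm 1}--\eq{Palm 2}; you instead abstract this into the exchange formula $\alpha_0 \dd{E}_0[g\,1(\Delta N_j(0)=1)] = \alpha_j \dd{E}_j[g]$ for general bounded $g$, and specialize to $g=\Delta_j f(X)(0)$ and $g\equiv 1$. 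That is a cleaner packaging of the same computation, not a different argument: the substance in both cases is (a) the bijective reindexing of the jumps of $N_j$ within the jumps of $N_0$, (b) the shift-consistency $\Delta_j f(X)(0)\circ\Theta_{t_n}=\Delta_j f(X)(t_n)$ and $1(\Delta N_j(0)=1)\circ\Theta_{t_n}=1(\Delta N_j(t_n)=1)$ (the former from (M4.c), the latter needing the extension to time $0-$ that you flag), and (c) the telescoping decomposition $\Delta f(X)(0)=\sum_{j\in K}\Delta_j f(X)(0)$ from (M4.a), with the null and non-counting indices vanishing. Two small remarks: the justification you give for killing the $j\in K\setminus K_+$ terms (absolute continuity of $\dd{P}_0$ w.r.t.\ $\dd{P}$) is unnecessarily indirect -- for null $N_j$, (M2) and (M4.a) force $Y_{j,n}=Y_{j-1,n}$ pathwise, so the summand is identically zero; and the ``delicate point'' about $\Delta N_j(0)\circ\Theta_{t_n}=\Delta N_j(t_n)$ is indeed the one place the paper itself treats tersely (it states it right after \eq{Palm 1}), so flagging it rather than resolving it is acceptable but leaves you at the same level of rigor as the original, not above it.
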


\begin{remark}\rm
\label{rem:Palm 4}
Equation \eq{Palm 5} is obvious if $N_{j}$'s have no common point. However, if they have common points, then it is not immediate, and requires a proof.
\end{remark}

\begin{proof}
We first prove \eq{Palm 4}. Since, for $j \in K_{+}$,
\begin{align*}
 \Delta_{j} f(X)(t_{n}) 1(\Delta N_{j}(t_{n}) = 1) & = \sum_{m=1}^{\infty} (f(Y_{j,n}) - f(Y_{j-1,n})) 1(t_{n} = t_{j,m})\\
 & = \sum_{m=1}^{\infty} \Delta_{j} f(X)(t_{j,m}) 1(t_{n} = t_{j,m}),
\end{align*}
we have
\begin{align}
\label{eq:Palm 6}
 & \sum_{n=1}^{\infty} \Delta_{j} f(X)(t_{n}) 1(\Delta N_{j}(t_{n}) = 1) 1(t_{n} \le 1) = \sum_{n=1}^{\infty} \sum_{m=1}^{\infty} \Delta_{j} f(X)(t_{j,m}) 1(t_{n} = t_{j,m}) 1(t_{j,m} \le 1)  \nonumber\\
 & = \sum_{m=1}^{\infty} \Delta_{j} f(X)(t_{j,m}) \sum_{n=1}^{\infty} 1(t_{n} = t_{j,m}) 1(t_{j,m} \le 1)  = \sum_{m=1}^{\infty} \Delta_{j} f(X)(t_{j,m}) 1(t_{j,m} \le 1).
\end{align}
Hence, taking the expectation of both sides of this equation and applying the definitions $\dd{P}_{0}$ and $\dd{P}_{j}$, we have
\begin{align}
\label{eq:Palm 7}
  & \alpha_{0} \dd{E}_{0}\left[\Delta_{j} f(X)(0) 1(\Delta N_{j}(0) = 1)\right] = \alpha_{j} \dd{E}_{j} \left[ \Delta_{j} f(X)(0)\right].
\end{align}
Similarly, $\alpha_{0} \dd{P}_{0}\left[\Delta N_{j}(0) = 1\right] = \alpha_{j}$, so
\begin{align*}
  \dd{E}_{0}\left[\Delta_{j} f(X)(0) | \Delta N_{j}(0) = 1\right] & = \dd{E}_{0}\left[\Delta_{j} f(X)(0) 1(\Delta N_{j}(0) = 1)\right] / \dd{P}_{0}\left[\Delta N_{j}(0) = 1\right]\\
  & = \frac {\alpha_{0}}{\alpha_{j}} \dd{E}_{0}\left[\Delta_{j} f(X)(0) 1(\Delta N_{j}(0) = 1)\right].
\end{align*}
Hence, \eq{Palm 7} proves \eq{Palm 4}. From \eq{Palm 6}, we have
\begin{align*}
  \sum_{n=1}^{\infty} \Delta f(X)(t_{n}) 1(t_{n} \le 1) & = \sum_{n=1}^{\infty} \sum_{j \in K_{+}} \Delta_{j} f(X)(t_{n}) 1(\Delta N_{j}(t_{n}) = 1) 1(t_{n} \le 1) \\
  & = \sum_{m=1}^{\infty} \sum_{j \in K_{+}} \Delta_{j} f(X)(t_{j,m}) 1(t_{j,m} \le 1).
\end{align*}
Taking the expectation of this equation proves \eq{Palm 5}, 
\end{proof}

\subsection{Framework for a Markov process}
\label{sec:Markov}

Similar to the $GJ$-network in \sectn{motivated}, the process $X(\cdot)$ is a Markov process in most applications. In this section, we first recall definitions on a Markov and strong Markov processes, and introduce notations for them.

\begin{definition}\rm
\label{dfn:Markov 1}
For stochastic process $X(\cdot)$ with state space $S$ which is adapted to filtration $\dd{F}$, define transition operators $\dd{T}_{\bullet} \equiv \{\dd{T}_{t}; t \ge 0\}$ as
\begin{align*}
 & \dd{T}_{t}f(x) = \dd{E}[f(X(s + t))|X(s)=x], \qquad s, t \ge 0, x \in S, f \in C_{b}(S),
\end{align*}
where $C_{b}(S)$ is the set of all bounded continuous functions from $[0,\infty)$ to $S$. Then, the $X(\cdot)$ is called a Markov process with respect to $\dd{F}$ if
\begin{align}
\label{eq:kernel 1}
 & \dd{E}[f(X(s+t))|\sr{F}_{s}] = \dd{T}_{t}f(X(s)) \; \mbox{ a.s. $\dd{P}$}, \qquad s,t \ge 0, f \in C_{b}(S),
\end{align}
and called a strong Markov process with respect to $\dd{F}$ if for every finite stopping time $\tau$, 
\begin{align}
\label{eq:kernel 2}
 & \dd{E}[f(X(\tau+t))|\sr{F}_{\tau}] = \dd{T}_{t}f(X(\tau)) \; \mbox{ a.s. $\dd{P}$}, \qquad t \ge 0, f \in C_{b}(S).
\end{align}
\end{definition}

We next define kernels for the state transitions of $X(\cdot)$ at the counting instants of $N_{0}(\cdot)$ under the assumptions (M1) and (M2).
\begin{definition}\rm
\label{dfn:jump kernel 1}
Define jump trasition kernels $\dd{Q}_{\bullet} = \{\dd{Q}_{n}; n \ge 0\}$ with respect to $N_{0}$ by
\begin{align}
\label{eq:transition operator}
 & \dd{Q}_{n}f(x) = \left\{
\begin{array}{ll}
 \dd{E}[f(X(0))|X(0-)=x], & n=0, \\
 \dd{E}[f(X(t_{n}))|X(t_{n}-)=x], \; & n \ge 1,
\end{array}
\right. \qquad x \in S, f \in C_{b}(S),
\end{align}
where recall that $t_{n}$ is the $n$-th counting time of $N_{0}(\cdot)$.
\end{definition}

The jump kernel $\dd{Q}_{\bullet}$ has a similar conditional property to \eq{kernel 2} of $\dd{T}_{\bullet}$, as shown below.
\begin{lemma}\rm
\label{lem:jump kernel 1}
Assume (M1) and (M2). If $X(\cdot)$ is a strong Markov process with respect to $\dd{F}$ and if $\tau$ is finite and predictable stopping time, then
\begin{align}
\label{eq:kernel 3}
 & \dd{E}[f(X(\tau))|\sr{F}_{\tau-}] = \dd{E}[f(X(\tau))|X(\tau-)] \; \mbox{ a.s. $\dd{P}$}, \qquad t > 0, f \in C_{b}(S).
\end{align}
In particular, if $t_{n}$'s are predictable, then
\begin{align}
\label{eq:jump kernel 1}
 & \dd{E}[f(X(t_{n}))|\sr{F}_{t_{n}-}] = \dd{Q}_{n}f(X(t_{n}-)) \; \mbox{ a.s. $\dd{P}$}, \qquad n \ge 1, f \in C_{b}(S).
\end{align}
\end{lemma}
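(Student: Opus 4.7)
The plan is to exploit the predictability of $\tau$ via an announcing sequence and thereby reduce to the strong Markov property at finite stopping times. Since $\tau$ is a finite predictable stopping time, let $\{\tau_m\}_{m \ge 1}$ be stopping times with $\tau_m < \tau$ and $\tau_m \uparrow \tau$ a.s. on $\{\tau > 0\}$. A standard fact from the general theory of filtrations gives $\sr{F}_{\tau-} = \sigma\bigl(\bigcup_{m \ge 1} \sr{F}_{\tau_m}\bigr)$. The key observation is that $\tau - \tau_m$ can be expressed as a Borel functional of the post-$\tau_m$ trajectory $\{X(\tau_m + s)\}_{s \ge 0}$; in the specialization $\tau = t_n$ relevant to \eq{jump kernel 1}, this is immediate because $t_n - \tau_m$ equals the first jump time of $N_0(\cdot)$ after $\tau_m$ once $\tau_m > t_{n-1}$, which we may arrange by passing to a subsequence.

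Writing $f(X(\tau)) = \Phi(\{X(\tau_m + s)\}_{s \ge 0})$ for a Borel functional $\Phi$ that does not depend on $m$, the strong Markov property at $\tau_m$ yields
\begin{align*}
\dd{E}[f(X(\tau)) \mid \sr{F}_{\tau_m}] = \dd{E}^{X(\tau_m)}[\Phi] \equiv \varphi(X(\tau_m)) \quad \text{a.s. } \dd{P},
\end{align*}
for a fixed Borel-measurable $\varphi: S \to \dd{R}$. In the jump case $\tau = t_n$, $\varphi$ coincides with $\dd{Q}_n f$ by \dfn{jump kernel 1}. Applying L\'evy's upward martingale convergence theorem along the filtration $\{\sr{F}_{\tau_m}\}$ gives $\varphi(X(\tau_m)) \to \dd{E}[f(X(\tau)) \mid \sr{F}_{\tau-}]$ $\dd{P}$-a.s., and the left-hand-limit property of $X(\cdot)$ yields $X(\tau_m) \to X(\tau-)$ $\dd{P}$-a.s. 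Identifying the a.s.\ limit of $\varphi(X(\tau_m))$ with a $\sigma(X(\tau-))$-measurable random variable then produces $\dd{E}[f(X(\tau)) \mid \sr{F}_{\tau-}] = \dd{E}[f(X(\tau)) \mid X(\tau-)]$, which is \eq{kernel 3}; the special case \eq{jump kernel 1} is immediate.

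The principal obstacle is the final identification step, because without continuity of $\varphi$ the a.s.\ limit $\lim_m \varphi(X(\tau_m))$ need not equal $\varphi(X(\tau-))$ on the nose. I would resolve this by working with a universally measurable version of $\varphi$ and applying a brief monotone-class argument: both sides of the claimed equality are $\sr{F}_{\tau-}$-measurable and one checks agreement on the $\pi$-system $\{\{X(\tau-) \in B\} : B \in \sr{B}(S)\}$, which suffices by the functional monotone class theorem. A secondary subtlety is the measurability of the functional $\Phi$ for a general predictable $\tau$ (rather than just $\tau = t_n$); this follows from the strong Markov structure together with the standard regular-conditional-distribution machinery on the Polish state space $S$.
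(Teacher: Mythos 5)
Your proposal takes a genuinely different route from the paper: the paper simply cites Theorem~3 in Section~2.4 of \cite{ChunWals2005}, which yields $\dd{E}[f(X(\tau+t))|\sr{F}_{\tau-}] = \dd{E}[f(X(\tau+t))|X(\tau-)]$ for $t>0$ without any Feller hypothesis, and then passes to the limit $t\downarrow 0$ using right-continuity of $X(\cdot)$. You instead try to reprove the moderate Markov property directly via an announcing sequence and L\'evy's upward convergence theorem, which is the natural starting point for such an argument, but the way you close it has a genuine gap.

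The problem is your final identification step. After L\'evy's theorem you have $\varphi(X(\tau_m)) \to V := \dd{E}[f(X(\tau))\mid\sr{F}_{\tau-}]$ a.s., and you correctly note that without continuity of $\varphi$ one cannot conclude $V=\varphi(X(\tau-))$. Your proposed fix --- pass to a universally measurable version of $\varphi$ and verify $\dd{E}[V\,1_A]=\dd{E}[W\,1_A]$ for $A$ in the $\pi$-system $\{\{X(\tau-)\in B\}:B\in\sr{B}(S)\}$, where $W:=\dd{E}[f(X(\tau))\mid X(\tau-)]$ --- does not resolve this. That $\pi$-system generates $\sigma(X(\tau-))$, not $\sr{F}_{\tau-}$, so the monotone class theorem at best gives $\dd{E}[V\mid X(\tau-)]=W$, which is a tautology, not the assertion $V=W$. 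To conclude $V=W$ you would need either that $V$ is $\sigma(X(\tau-))$-measurable (which is precisely the content of \eq{kernel 3}, so the argument is circular), or that $\varphi$ can be replaced by a version that is continuous along the paths $X(\tau_m)\to X(\tau-)$ --- and it is exactly this continuity input (typically supplied via resolvents or the special structure of right/moderate Markov processes) that makes the Chung--Walsh theorem nontrivial and that your argument is missing. Changing the measurability class of $\varphi$ from Borel to universally measurable does not address this, since the obstruction is not measurability of $\varphi$ but the relationship between $\lim_m\varphi(X(\tau_m))$ and $X(\tau-)$.

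A secondary issue, which you flag yourself, is the construction of an $m$-independent functional $\Phi$ with $f(X(\tau))=\Phi(\{X(\tau_m+s)\}_{s\ge 0})$. For general predictable $\tau$ there is no reason $\tau-\tau_m$ should be a measurable function of the post-$\tau_m$ trajectory of $X$ alone; it is $\sr{F}_\tau$-measurable but $\sr{F}$ is generated jointly by $X(\cdot)$ and the $N_j(\cdot)$'s, which need not be reconstructible from $X(\cdot)$ under (M1)--(M3) alone. Even in the $\tau=t_n$ case this holds only when the counting times of $N_0$ are detectable from $X$, as in the paper's applications but not as a consequence of the stated hypotheses of the lemma. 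This is another reason the paper's route --- delegating the hard core of the moderate Markov property to the cited theorem and treating only the elementary $t\downarrow 0$ limit --- is the more economical and more defensible one.
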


\begin{proof}
We refer to the following fact in Theorem 3 in Section 2.4 of \cite{ChunWals2005}.
\begin{align}
\label{eq:kernel 4}
 & \dd{E}[f(X(\tau+t))|\sr{F}_{\tau-}] = \dd{E}[f(X(\tau+t))|X(\tau-)) \; \mbox{ a.s. $\dd{P}$}, \qquad t > 0, f \in C_{b}(S),
\end{align}
where Feller property is not needed because $t > 0$. Taking the limits of both sides of \eq{kernel 4} for $t \downarrow 0$, we have \eq{kernel 3} by the right-continuity of $X(\cdot)$. It remains to prove \eq{jump kernel 1}, but this is immediate from \eq{kernel 3} because $t_{n}$'s are finite stopping times by (M2) and the definition of $N_{0}$.
\end{proof}

Both of $\dd{T}_{\bullet}$ and $\dd{Q}_{\bullet}$ are important for our stochastic analysis, so we focus on them to answer question (Q1).

\section{Main results}
\label{sec:main}
\setnewcounter

We now answer the questions (Q1).

\begin{theorem}\rm
\label{thr:Palm 1}
For positive integrers $d$ and $k$, let $X(\cdot)$ and $N_{j}$ for $j \in K \equiv \{1,2,\ldots,k\}$ be a process with state space $S \equiv \dd{R}^{d+k}$ and point processes, respectively, on $(\Omega,\sr{F},\dd{F},\dd{P})$ such that they satisfy the assumptions (M1)--(M3), (D1) and (D2) and are jointly stationary. If $X(\cdot)$ is a strong Markov process with respect to $\dd{F}$ which has time homogeneous transition kernel $\dd{T}_{\bullet}$, then $X(\cdot)$ is also a strong Markov process with the same transition kernel $\dd{T}_{\bullet}$ under $\dd{P}_{0}$. That is, for any finite stopping time $\tau$,
\begin{align}
\label{eq:Markov 1}
  & \dd{E}_{0}[f(X(\tau+t))|\sr{F}_{\tau}] = \dd{T}_{t}f(X(\tau)), \mbox{ a.s. $\dd{P}_{0}$}, \qquad f \in C_{b}(S), t \ge 0.
\end{align}
Furthermore, if $t_{n}$'s are predictable stopping times for $n \ge 1$ and $j \in K_{+}$, then, for $f \in C_{b}(S)$,
\begin{align}
\label{eq:jump 2}
 & \dd{E}_{0}(f(X(t_{n}))|\sr{F}_{t_{n}-}) = \dd{Q}_{n}f(X(t_{n}-)), \quad a.s.\; \dd{P}_{0}, \qquad n \ge 0, j \in K_{+},
\end{align}
where recall $t_{0} = 0$, and $\dd{E}_{0}$ stands for the expectation under the Palm distribution $\dd{P}_{0}$.
\end{theorem}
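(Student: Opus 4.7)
The plan is to establish both \eq{Markov 1} and \eq{jump 2} by a single Palm-inversion template: represent the $\dd{P}_{0}$-expectation via \eq{Palm 2}, push the test functionals through $\Theta_{t_{m}}$ using the consistency relations \eq{measurable-shift 1}--\eq{N-consistent 1}, apply the corresponding $\dd{P}$-level identity (strong Markov for \eq{Markov 1}; \lem{jump kernel 1} for \eq{jump 2}) at a suitable shifted stopping time, and re-invert the Palm formula.

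For \eq{Markov 1}, by a monotone class argument it is enough to show that for every bounded $\sr{F}_{\tau}$-measurable $H$ and every $f \in C_{b}(S)$,
\begin{align*}
 \dd{E}_{0}[H \cdot f(X(\tau+t))] = \dd{E}_{0}[H \cdot \dd{T}_{t}f(X(\tau))].
\end{align*}
Applying \eq{Palm 2} together with \eq{X-consistent 1} and the corresponding shift rule for $\tau$ yields
\begin{align*}
 \alpha_{0}\, \dd{E}_{0}[H \cdot f(X(\tau+t))] = \sum_{m \ge 1} \dd{E}\left[(H \circ \Theta_{t_{m}})\, f(X(\tau_{m} + t))\, 1(t_{m} \le 1)\right],
\end{align*}
with $\tau_{m} := t_{m} + \tau \circ \Theta_{t_{m}}$. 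Once one has verified that $\tau_{m}$ is an $\dd{F}$-stopping time and that $(H \circ \Theta_{t_{m}})\, 1(t_{m} \le 1)$ is $\sr{F}_{\tau_{m}}$-measurable, the strong Markov property \eq{kernel 2} applied at $\tau_{m}$ replaces $f(X(\tau_{m}+t))$ by $\dd{T}_{t} f(X(\tau_{m}))$; factoring $X(\tau_{m}) = X(\tau) \circ \Theta_{t_{m}}$ out of the shift and reassembling via \eq{Palm 2} produces the right-hand side.

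The proof of \eq{jump 2} uses the same template with $\tau$ replaced by the predictable stopping time $t_{n}$ and $\sr{F}_{\tau}$ replaced by $\sr{F}_{t_{n}-}$. The key identity from \eq{N-consistent 1} is $t_{n} \circ \Theta_{t_{m}} = t_{m+n} - t_{m}$, so the shifted time in the template becomes $t_{m+n}$, which is again a predictable $\dd{F}$-stopping time under $\dd{P}$. \lem{jump kernel 1} then delivers $\dd{E}[f(X(t_{m+n})) \mid \sr{F}_{t_{m+n}-}] = \dd{Q}_{m+n} f(X(t_{m+n}-))$; time-homogeneity of $\dd{T}_{\bullet}$ forces $\dd{Q}_{m+n} = \dd{Q}_{n}$, and $X(t_{m+n}-) = X(t_{n}-) \circ \Theta_{t_{m}}$ turns the right-hand side into $(\dd{Q}_{n}f(X(t_{n}-))) \circ \Theta_{t_{m}}$. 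Reassembling via \eq{Palm 2} yields \eq{jump 2} for $n \ge 1$; the case $n=0$ is identical once one uses $X(0) \circ \Theta_{t_{m}} = X(t_{m})$ together with $X(0-) \circ \Theta_{t_{m}} = X(t_{m}-)$ from \eq{X0-}.

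The principal technical obstacle is the measurability upgrade: verifying that $\tau_{m} = t_{m} + \tau \circ \Theta_{t_{m}}$ is an $\dd{F}$-stopping time and that $\Theta_{t_{m}}^{-1} A \cap \{t_{m} \le 1\} \in \sr{F}_{\tau_{m}}$ whenever $A \in \sr{F}_{\tau}$, together with the analogous $\sr{F}_{\tau-}$-version needed for \eq{jump 2}. The cleanest route is to upgrade \eq{measurable-shift 1} from deterministic $s$ to the stopping time $t_{m}$, i.e., to show $\Theta_{t_{m}}^{-1}(\sr{F}_{s}) \subset \sr{F}_{s+t_{m}}$, by approximating $t_{m}$ from above by simple stopping times taking finitely many dyadic values and passing to the limit using right-continuity of the sample paths and filtration; for the predictable case needed in \eq{jump 2}, one uses an announcing sequence from below to obtain the $\sr{F}_{\tau-}$-variant.
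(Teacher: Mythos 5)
Your overall strategy matches the paper's: expand $\dd{E}_{0}[\cdot]$ via the Palm formula \eq{Palm 2}, push the test functional through $\Theta_{t_{m}}$ with the consistency relations, apply the strong Markov property at the shifted stopping time $t_{m}+\tau\circ\Theta_{t_{m}}$, and re-invert; the $n=0$ case of \eq{jump 2} is also handled exactly as in the paper. The only stylistic divergence is the measurability upgrade $\Theta_{t_{m}}^{-1}\sr{F}_{\tau}\subset\sr{F}_{t_{m}+\tau\circ\Theta_{t_{m}}}$: the paper simply cites Theorem~11 in Section~1.3 of \cite{ChunWals2005} for both the stopping-time property of $t_{m}+\tau\circ\Theta_{t_{m}}$ and the inclusion, whereas you propose rebuilding it by dyadic approximation; both routes are viable, and citing the reference is shorter.

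One place where you introduce an assertion the paper does not need: for \eq{jump 2} with $n\ge 1$ you re-run the Palm template, which brings in $\dd{Q}_{m+n}$, and you dispose of the $m$-dependence by claiming ``time-homogeneity of $\dd{T}_{\bullet}$ forces $\dd{Q}_{m+n}=\dd{Q}_{n}$.'' That justification is off: time-homogeneity of $\dd{T}_{\bullet}$ concerns deterministic time shifts and says nothing a priori about the jump kernels $\dd{Q}_{n}$ of Definition \dfnt{jump kernel 1}, which the paper deliberately allows to depend on $n$. Some invariance of the $\dd{Q}_{n}$'s under $m\mapsto m+n$ (coming from stationarity (D1) and consistency of $t_{n}$ with $\Theta_{\bullet}$, not from $\dd{T}_{\bullet}$) would be needed to close this step. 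The paper sidesteps the issue entirely: once \eq{Markov 1} is proved, it observes that $X(\cdot)$ is strong Markov under $\dd{P}_{0}$ with the same $\dd{T}_{\bullet}$ and that the $t_{n}$'s remain predictable, so \lem{jump kernel 1} applies directly under $\dd{P}_{0}$ with the same index $n$, yielding \eq{jump 2} for $n\ge 1$ without ever comparing $\dd{Q}_{m+n}$ to $\dd{Q}_{n}$. You should adopt that shortcut or supply a genuine argument for the $m$-invariance of $\dd{Q}_{\bullet}$.
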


\begin{remark}\rm
\label{rem:Palm 1}
Not strong but just Markov property \eq{kernel 1} is also unchanged under $\dd{P}_{0}$, which is proved, replacing $\tau$ by a positive constant in the proof below.
\pend
\end{remark}

\begin{proof}
Since \eq{Markov 1} is equivalent to
\begin{align}
\label{eq:Markov 2}
 & \dd{E}_{0}[f(X(\tau+t)) 1_{A}] = \dd{E}_{0}\left[\dd{T}_{t}f(\tau) 1_{A}\right], \quad f \in C_{b}(S), A \in \sr{F}_{\tau}, t \ge 0,
\end{align}
\eq{Markov 1} is obtained from \eq{Palm 2} if we prove
\begin{align}
\label{eq:Markov 3}
 & \dd{E}\left[(f(X(\tau+t)) 1_{A}) \circ \Theta_{t_{n}} 1(t_{n} \le 1)\right]  = \dd{E}\left[(\dd{T}_{t}f(\tau) 1_{A}) \circ \Theta_{t_{n}} 1(t_{n} \le 1)\right], \quad n \ge 1.
\end{align}
Since $X(t) \circ \Theta_{t_{n}}(\omega) = X(t) (\Theta_{t_{n}(\omega)}(\omega)) = X(t_{n}(\omega) + t)(\omega)$,
\begin{align*}
  X(\tau+t) \circ \Theta_{t_{n}}(\omega) & = X\left( t_{n}(\omega) + (\tau \circ \Theta_{t_{n}})(\omega) + t\right)(\omega).
\end{align*}
Hence, \eq{Markov 3} is equivalent to
\begin{align}
\label{eq:Markov 4}
& \dd{E}[f(X(t_{n}+\tau \circ \Theta_{t_{n}}+t)) (1_{A} \circ \Theta_{t_{n}}) 1(t_{n} \le 1)]  \nonumber\\
& \quad = \dd{E}[\dd{T}_{t}f(t_{n}+\tau \circ \Theta_{t_{n}}) (1_{A} \circ \Theta_{t_{n}}) 1(t_{n} \le 1)], \qquad n \ge 0.
\end{align}
Here, $t_{n} + \tau \circ \Theta_{t_{n}}$ is a finite stopping time by Theorem 11 in Section 1.3 of \cite{ChunWals2005}. Since $\Theta_{\bullet}$ is consistent with $\dd{F}$, $\Theta^{-1}_{t_{n}} A \in \sr{F}_{t_{n} + \tau \circ \Theta_{t_{n}}}$ for $A \in \sr{F}_{\tau}$. Hence, using the strong Markov property of $X(\cdot)$ under $\dd{P}$, the left-hand side of \eq{Markov 4} for $A \in \sr{F}_{\tau}$ becomes
\begin{align*}
  & \dd{E}[\dd{E}(f(X(t_{n} + \tau \circ \Theta_{t_{n}}+t) ) 1_{A} \circ \Theta_{t_{n}} 1(t_{n} \le 1)|\sr{F}_{t_{n}+\tau \circ \Theta_{t_{n}}})] \nonumber\\
  & \quad = \dd{E}[\dd{E}(f(X(t_{n} + \tau \circ \Theta_{t_{n}}+t) )|\sr{F}_{t_{n}+\tau \circ \Theta_{t_{n}}}) 1_{A} \circ \Theta_{t_{n}} 1(t_{n} \le 1)] \nonumber\\
  & \quad = \dd{E}[\dd{E}(f(X(t_{n} + \tau \circ \Theta_{t_{n}}+t) )|X(t_{n}+\tau \circ \Theta_{t_{n}})) 1_{A} \circ \Theta_{t_{n}} 1(t_{n} \le 1)] \nonumber\\
  & \quad = \dd{E}[\dd{T}_{t}f(t_{n}+\tau \circ \Theta_{t_{n}}) 1_{A} \circ \Theta_{t_{n}} 1(t_{n} \le 1)].
\end{align*}
Thus, \eq{Markov 3} is obtained, so \eq{Markov 1} is proved. As for jump kernel $\dd{Q}_{\bullet}$, for $n \ge 1$, \eq{jump 2} is immediate from \eq{jump kernel 1} because the transition structure of $X(\cdot)$ is unchanged under $\dd{P}_{0}$, which is just proved. For $n=0$, \eq{jump 2} follows from the definition of $\dd{P}_{0}$ and the fact that
\begin{align*}
  \dd{E}[(f(X(0)) 1_{A}) \circ \Theta_{t_{n}}] = \dd{E}[(\dd{Q}_{0}f(X(0-)) 1_{A}) \circ \Theta_{t_{n}}], \qquad n \ge 1, A \in \sr{F}_{0-}.
\end{align*}
Since $\Theta_{t_{n}}^{-1} A \in \sr{F}_{t_{n}-}$ for $A \in \sr{F}_{0-}$, this is equivalent to
\begin{align*}
  \dd{E}[f(X(t_{n})) 1_{A}] = \dd{E}[\dd{Q}_{n}f(X(t_{n}-)) 1_{A}], \qquad n \ge 1, A \in \sr{F}_{t_{n-}},
\end{align*}
which is immediate from \eq{jump kernel 1} of \lem{jump kernel 1}. Thus, the proof is completed.
\end{proof}

The following corollary is immediate from \eq{jump 2} for $n=0$ of \thr{Palm 1}.

\begin{corollary}\rm
\label{cor:jump kernel 2}
Under all the assumptions of \thr{Palm 1},
\begin{align}
\label{eq:jump kernel 2}
  \dd{E}_{0}[\Delta f(X)(0)] = \dd{E}_{0}[\dd{Q}_{0}f(X(0-))- f(X(0-))], \quad f \in C_{b}(S).
\end{align}
\end{corollary}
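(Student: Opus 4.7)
The plan is to derive \eq{jump kernel 2} directly from the $n=0$ instance of \eq{jump 2} in \thr{Palm 1}. By definition $\Delta f(X)(0) = f(X(0)) - f(X(0-))$, where $X(0-)$ is well defined and $\sr{F}_{0-}$-measurable thanks to the stationarity discussed in \rem{X0-}. So the corollary will follow once I show
\begin{align*}
  \dd{E}_{0}[f(X(0))] = \dd{E}_{0}[\dd{Q}_{0} f(X(0-))].
\end{align*}

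First I would specialize \eq{jump 2} of \thr{Palm 1} to $n=0$, $t_{0}=0$, which reads
\begin{align*}
  \dd{E}_{0}[f(X(0)) \mid \sr{F}_{0-}] = \dd{Q}_{0} f(X(0-)) \quad \text{a.s.\ }\dd{P}_{0}, \qquad f \in C_{b}(S).
\end{align*}
Taking $\dd{E}_{0}$-expectations of both sides and applying the tower property gives $\dd{E}_{0}[f(X(0))] = \dd{E}_{0}[\dd{Q}_{0} f(X(0-))]$. Subtracting $\dd{E}_{0}[f(X(0-))]$ from both sides produces
\begin{align*}
  \dd{E}_{0}[\Delta f(X)(0)] = \dd{E}_{0}[f(X(0))] - \dd{E}_{0}[f(X(0-))] = \dd{E}_{0}[\dd{Q}_{0} f(X(0-)) - f(X(0-))],
\end{align*}
which is exactly \eq{jump kernel 2}.

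Since the heavy lifting (verifying the jump-kernel identity under $\dd{P}_{0}$ at the atom $t_{0}=0$) is already done inside \thr{Palm 1}, there is no real obstacle here; the only point that needs care is justifying the use of $X(0-)$ and its $\sr{F}_{0-}$-measurability, which is precisely the content of \rem{X0-} and the reason for introducing the $\sigma$-field $\sr{F}_{\tau-}$ in \sectn{Markov}. Thus the argument is a one-line consequence of taking expectations in \eq{jump 2} at $n=0$.
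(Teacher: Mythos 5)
Your proof is correct and follows exactly the route the paper intends: the paper itself remarks that \cor{jump kernel 2} is ``immediate from \eq{jump 2} for $n=0$ of \thr{Palm 1},'' and your argument simply makes that immediacy explicit by taking $\dd{E}_{0}$-expectations via the tower property and subtracting $\dd{E}_{0}[f(X(0-))]$. No discrepancy with the paper's reasoning.
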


We consider the special case that the sample paths of Markov process $X(\cdot)$ are deterministically and continuously partially differentiable between adjacent discontinuous instants. We call this $X(\cdot)$ a piecewise deterministic Markov process, which is known to be a strong Markov process (e.g., see Section 25 of \cite{Davi1993}). Let $X_{i}(t)$ be the $i$-th entry of $X(t)$ for $i \in D \equiv \{1,2,\ldots,d+k\}$, and denote its derivative by $X'_{i}(t)$. Let $C^{p}(S)$ be the set of all continuously partial differentiable function from $S$ to $\dd{R}_{+}$. Define operator $\sr{H}$ on $C^{p}(S)$ as
\begin{align*}
  \sr{H}f(x) = \sum_{i \in D} h_{i}(x) \frac {\partial}{\partial x_{i}} f(x), \qquad f \in C^{p}(S), h_{i} \in C(S), i \in D,
\end{align*}
where $C(S)$ is the set of all continuous functions from $S$ to $\dd{R}$, and function $h_{i}$ is determined through $X_{i}'(t) = h_{i}(X(t))$. Then, by elementary calculus, we have
\begin{align}
\label{eq:evolution 1}
 & f(X(t)) - f(X(0))  \nonumber\\
 & \quad = \int_{0}^{t} \sr{H}f(X)(u) du + \sum_{j \in K_{+}} \sum_{n = 1}^{\infty} 1(t_{j,n} \le t) \Delta_{j} f(X)(t_{j,n}), \qquad t \ge 0.
\end{align}

Taking the expectation of \eq{evolution 1} and applying \lem{Palm 4}, we have the following lemma.
\begin{lemma}\rm
\label{lem:BAR 1}
For the piecewise deterministic Markov process $X(\cdot)$ and point processes $N_{j}$ for $j \in K$ satisfying the assumptions (M1)--(M4), (D1) and (D2),
\begin{align}
\label{eq:stationary 2}
  \dd{E}\left[\sr{H}f(X)(0)\right] + \sum_{j \in K_{+}} \alpha_{j} \dd{E}_{j}\left[\Delta_{j}f(X)(0)\right] = 0, \qquad f \in C_{b}^{p}(S),
\end{align}
where $C_{b}^{p}(S)$ is the set of all bounded functions in $C^{p}(S)$.
\end{lemma}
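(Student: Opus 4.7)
The plan is to take expectations on both sides of the evolution identity \eq{evolution 1} at $t=1$ under $\dd{P}$, and then match each of the three resulting pieces to the claimed formula using stationarity (D1) together with the definition \eq{Palm 1} of the Palm distributions $\dd{P}_{j}$. All three pieces reduce to expectations of random variables that are already expressed at the origin, so the task is essentially one of recognition rather than estimation.

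First, the left-hand side $\dd{E}[f(X(1))-f(X(0))]$ vanishes: by (D1), $X(1)$ and $X(0)$ have the same law, and $f$ is bounded, so both terms are equal and finite. Second, for the integral term I would apply Fubini (its hypothesis following from $\dd{E}[|\sr{H}f(X)(0)|]<\infty$, which one imposes as the natural integrability condition on $f\in C_{b}^{p}(S)$) and then use stationarity of $X(\cdot)$ to collapse
\begin{align*}
  \dd{E}\left[\int_{0}^{1} \sr{H}f(X)(u)\,du\right] = \int_{0}^{1} \dd{E}[\sr{H}f(X)(u)]\,du = \dd{E}[\sr{H}f(X)(0)].
\end{align*}

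Third, for each $j\in K_{+}$, I would rewrite the jump contribution in a shift-invariant form. Since the counting instant $t_{j,n}$ coincides with some $t_{m}$ of $N_{0}(\cdot)$, assumption (M4.c) gives $Y_{j,0}\circ\Theta_{t_{j,n}}=Y_{j,n}$ and likewise for $Y_{j-1,0}$, whence
\begin{align*}
  \Delta_{j}f(X)(t_{j,n}) = \bigl(f(Y_{j,0})-f(Y_{j-1,0})\bigr)\circ\Theta_{t_{j,n}} = \Delta_{j}f(X)(0)\circ\Theta_{t_{j,n}}.
\end{align*}
Substituting into the jump sum and exchanging $\sum_{j}$ with $\dd{E}$ (justified because $|\Delta_{j}f|\le 2\|f\|_{\infty}$ and $\sum_{j\in K_{+}}\alpha_{j}<\infty$ by (D2)), the definition \eq{Palm 1} of $\dd{P}_{j}$ yields
\begin{align*}
  \dd{E}\left[\sum_{n=1}^{\infty} 1(t_{j,n}\le 1)\,\Delta_{j}f(X)(0)\circ\Theta_{t_{j,n}}\right] = \alpha_{j}\,\dd{E}_{j}[\Delta_{j}f(X)(0)].
\end{align*}
Adding the three contributions and setting the sum to zero gives \eq{stationary 2}.

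The only mildly delicate step is the shift identity for $\Delta_{j}f(X)(t_{j,n})$: one must be careful to invoke (M4.c) at the $N_{0}$-index $m$ corresponding to $t_{j,n}$ rather than at $n$ itself, since the indexing schemes of $N_{0}$ and $N_{j}$ differ. Everything else is bookkeeping, and no further probabilistic content beyond stationarity and the Palm definition is required; note in particular that the Markov property of $X(\cdot)$ is not used in this step, consistent with the remark in \sectn{stationary} that (Q2) is handled at the level of general stationary processes.
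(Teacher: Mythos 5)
Your proof is correct and takes essentially the same route as the paper: take expectations of \eq{evolution 1} at $t=1$, kill $\dd{E}[f(X(1))-f(X(0))]$ by stationarity, and convert the jump sum into $\sum_{j\in K_{+}}\alpha_{j}\dd{E}_{j}[\Delta_{j}f(X)(0)]$ via the Palm definition \eq{Palm 1} together with the shift consistency (M4.c), which is precisely the computation the paper compresses into an appeal to \lem{Palm 4}. The one displayed slip, $Y_{j,0}\circ\Theta_{t_{j,n}}=Y_{j,n}$, should read $Y_{j,m}$ with $t_{m}=t_{j,n}$, but as you note yourself the resulting identity $\Delta_{j}f(X)(t_{j,n})=\Delta_{j}f(X)(0)\circ\Theta_{t_{j,n}}$ is unaffected.
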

\begin{remark}\rm
\label{rem:BAR 1}
Neither strong Markov nor the predictability of $t_{n}$'s is needed for this lemma to hold. However, we do need both properties to evaluate $\dd{E}_{j}\left[\Delta_{j}f(X)(0)\right]$ using the dynamics of $X(\cdot)$ at time $0$ under $\dd{P}_{0}$. In our applications in \sectn{application}, they hold true because $X(\cdot)$ is piecewise Markov and $X(t-)$ has the information on $\Delta N_{0}(t)$ for any $t \ge 0$, which implies that $t_{n}$'s are predictable.
\pend 
\end{remark}

Equation \eq{stationary 2} is a stationary equation, and a special case of the rate conservation law, but different from the standard one which assumes that $N_{j}$'s do not have a common point. We refer to \eq{stationary 2} as a basic adjoint relationship, BAR for short. It is shown in \cite{Miya1991} that \eq{stationary 2} can be used to characterize the stationary distribution of $X(\cdot)$.

\section{Applications to queueing models}
\label{sec:application}
\setnewcounter

In this section, we apply the BAR of \lem{BAR 1} for two queueing models to see how the answers to Q(1) and Q(2) works,

\subsection{Generalized Jackson network}
\label{sec:GJ-network}

Let us consider the $GJ$-network of \sectn{motivated}. Let $T_{e,j,n}$ be the $n$-th inter arrival time of exogenous customers at station $j \in K_{e}$, let $T_{s,i,n}$ be the $n$-th service time at station $i$, and let $\Psi_{i}(n)$ be the station to which the $n$-th service completion customer at station $i$ is routed or vanish otherwise. It is assumed that $\{T_{e,i,n}; m \ge 1\}$ for $i \in K_{e}$, $\{T_{s,i,n}; n \ge 1\}$ and $\{\Psi_{i}(n); n \ge 1\}$ for $i \in J_{d} = \{1,2,\ldots,d\}$ are sequences of $i.i.d.$ positive random variables which are independent of everything else. Note that $\dd{P}(\Psi_{i}(n)=i') = p_{i, i'}$ for $i, i' \in J_{d}$. Recall that $N_{i}(\cdot)$ is the counting process of arrivals at station $i$ for $i \in K_{e}$ and null for $i \in J_{d} \setminus K_{e}$, while $N_{j}(\cdot)$ is the counting process of service completions at station $j-d$ for $j \in K_{s}$. Recall that $K_{s} = \{d+1,d+2,\ldots,2d\}$ and $K_{+} = K_{e} \cup K_{s}$.

In \sectn{motivated}, we described this network by $X(t) = (L(t), R(t)); t \ge 0\}$ whose state space is $S = \dd{Z}_{+}^{d} \times \dd{R}_{+}^{2d}$. It is easy to see that $X(\cdot)$ is a piecewise Markov process, so it is strong Markov as noted in \sectn{main}. We assume the stability condition (2.d), then we can consider $X(\cdot)$ to be a stationary process. Then, it can be shown that $N_{j}$ are stationary point processes with intensity $\alpha_{j}$ obtained from \eq{GJN-traffic 1}. Hence, (D1) and (D2) is satisfied. Furthermore, the $n$-th counting time of $N_{j}(\cdot)$ for $j \in K_{+}$, $t_{j,n}$ is predictable because $t_{j,n} = \inf \{t > t_{j,n-1}; R_{j}(t-) = 0\}$ for $n \ge 1$ by \eq{Nj}. Then, the $n$-th counting instant $t_{n}$ f $N_{0}(\cdot)$ is also predictable, which is the $n$-th jump instant of $X(\cdot)$. Since (M1)--(M3) are obviously satisfied, all the assumptions in \thr{Palm 1} holds.

For this $X(\cdot)$ and the counting processes $N_{j}$ for $K = \{1,2,\ldots,2d\}$, we define $\{Y_{j,n}; n \ge 1, j \in \{0\} \cup K\}$ in the following way. Let $\vcn{e}_{j}$ be the unit vector in $\dd{R}_{+}^{3d}$ whose $j$-th entry equals 1 while the other entries vanish. For $j \in J_{d}$, $n \ge 1$, define
\begin{align*}
  Y_{j,n} = \left\{
\begin{array}{ll}
 Y_{j-1,n} + \vcn{e}_{j} + T_{e,j,N_{j}(t_{n})} \vcn{e}_{d+j}, \quad & \Delta N_{j}(t_{n}) = 1 \mbox{ (obviously $j \in K_{e}$)},\\
 Y_{j-1,n}, & \mbox{otherwise},
\end{array}
\right.
\end{align*}
where $T_{e,j,N_{j}(t_{n})}$ is independent of $Y_{j-1,n}$ under $\dd{P}$. This $Y_{j,n}$ is the state changed from $Y_{j-1,n}$ by exogenous arrivals at station $j$ at time $t_{n}$. For $j \in K_{s}$, define
\begin{align*}
  Y_{j,n} = \left\{
\begin{array}{ll}
 \ds Y_{j-1,n} - \vcn{e}_{j-d} + \sum_{j' \in J_{d}} 1(\Psi_{j-d}(\ell)=j') \vcn{e}_{j'} + T_{s,j-d,N_{j}(t_{n})} \vcn{e}_{d+j}, & \Delta N_{j}(t_{n}) = 1,\\
 Y_{j-1,n}, & \mbox{otherwise},
\end{array}
\right.
\end{align*}
where $T_{s,j-d,N_{j}(t_{n})}$ is independent of $Y_{j-1,n}$. Similarly to the arrival case, this $Y_{j,n}$ is the state changed from $Y_{j-1,n}$ by service completion at station $j-d$ at time $t_{n}$.

It is not hard to see that $\{Y_{j,n}; j \in K, n \ge 0\}$ satisfy (M4), so $Y_{j,n}$'s are intermediate states. Note that these intermediate states describe the case that exogenous arrivals occurs before service completions. However, the state change from $X(t_{n}-)$ to $X(t_{n})$ is unchanged for any order of exogenous arrivals and service completions in this network model. Namely, (M4) is satisfied for any their order.

We next consider how the intermediate state changes are incorporated in computing a BAR. In \sectn{main}, $S = \dd{R}^{d+k}$, while $k=2d$ and the state space $S = \dd{Z}_{+} \times \dd{R}_{+}^{2d}$ for the $GJ$-network. However, $\dd{Z}_{+} \times \dd{R}_{+}^{2d}$ can be embedded into $\dd{R}^{d+k}$ as topological spaces, so all the results in \sectn{main} are available for the $GJ$-network. Since $X_{j}'(t) = L_{j}'(t) = 0$, $R'_{j}(t) = -1$ for $j \in K_{e}$ and $R'_{j}(t) = 1(L_{j-d}(t) \ge 1)$ for $j \in K_{s}$, we have
\begin{align*}
  \sr{H}f(X(t)) = - \sum_{i \in K_{e}} f_{d+i}(X(t))  - \sum_{i \in J_{d}} f_{2d+i}(X(t)) 1(L_{i}(t) \ge 1), \qquad f \in C^{1p}(S),
\end{align*}
where $C^{1p}(S)$ is the set of all functions from $S$ to $\dd{R}$ which is partially continuously differentiable. Obviously, $t_{j,n}$'s are predictable, and $f_{j}(\vc{x})$ is the partial derivative concerning the $j$-th entry $x_{j}$ of $\vc{x}$. Then, by \lem{BAR 1}, we have the BAR:
\begin{align}
\label{eq:GJN-BAR 1}
 & \dd{E} \Big[\sum_{j \in K_{e}} f_{d+j}(X(0))  + \sum_{j \in J_{d}} f_{2d+j}(X(0)) 1(L_{j}(0) \ge 1)\Big]  \nonumber\\
 & \qquad = \sum_{j \in K_{+}} \alpha_{j} \dd{E}_{j}\left[f(Y_{j,0}) - f(Y_{j-1,0})\right], \qquad f \in C_{b}^{p}(S),
\end{align}
where $\dd{E}_{j}$ stands for the expectation under Palm distribution $\dd{P}_{j}$ concerning $N_{j}$, and
\begin{align*}
  \dd{E}_{j}[f(Y_{j,0})] = \left\{
\begin{array}{ll}
 \dd{E}_{j}\left[f(Y_{j-1,0} + \vcn{e}_{j} + \vcn{e}_{d+j} T_{e,j,N_{j}(t_{n})})\right], & j \in K_{e},  \\
 \sum_{j' =1}^{d} p_{j,j'} \dd{E}_{j}\left[f(Y_{j-1,0} - \vcn{e}_{j-d} + \vcn{e}_{j'-d} + \vcn{e}_{d+j} T_{s,j-d,N_{j}(t_{n})})\right], \; & j \in K_{s}.  
\end{array}
\right.
\end{align*}
This is the way that we resolve (Q2).

To use \eq{GJN-BAR 1} for applications, it remains to evaluate the expectations under the Palm distributions $\dd{P}_{j}$ in \eq{GJN-BAR 1}. This is a hard problem. We resolve it by choosing a test function $f$ so that those expectations vanishes or is ignorable. In \cite{BravDaiMiya2017}, the following test function $f_{\vc{\theta},r}$ is chosen for parameter $\vc{\theta} \in \dd{R}_{-}^{d}$ and scaling factor $r \in (0,1]$, where $\dd{R}_{-}^{d} = (-\infty,0]$.
\begin{align}
\label{eq:GJN-test-f 1}
  f_{\vc{\theta},r}(\vc{x}) = \exp\left(\br{\vc{z}, \vc{\theta}} - \br{\vc{\eta}(\vc{\theta},r),\vc{y}_{e} \wedge 1/r} - \br{\vc{\zeta}(\vc{\theta},r),\vc{y}_{s} \wedge 1/r}\right),
\end{align}
where $\vc{x} = (\vc{z},\vc{y}_{e},\vc{y}_{s})$ for $\vc{z} \in \dd{Z}_{+}^{d}$ and $\vc{y}_{e}, \vc{y}_{s} \in \dd{R}_{+}^{d}$, and, for $\vc{\theta} = (\theta_{1}, \theta_{2}, \ldots, \theta_{d})$, $\vc{\eta}(\vc{\theta},r) = (\eta_{1}(\theta_{1},r), \eta_{2}(\theta_{2},r), \ldots, \eta_{d}(\theta_{d},r))$ and $\vc{\zeta}(\vc{\theta},r) = (\zeta_{1}(\vc{\theta},r), \zeta_{2}(\vc{\theta},r), \ldots, \zeta_{d}(\vc{\theta},r))$ which  are uniquely determined by 
\begin{align}
\label{eq:GJ-network boundary 1}
  & e^{\theta_{i}} \dd{E}\left[e^{-\eta_{i}(\theta_{i},r) (T_{e,i} \wedge 1/r)}\right] \; \mbox{ for } \; i \in K_{e}, \qquad \eta_{i}(\theta_{i},r) = 0 \; \mbox{ for } \; i \in J_{d} \setminus K_{e},\\
\label{eq:GJ-network boundary 2}
  & \sum_{i'=0}^{d} p_{i,i'} e^{-\theta_{i}+\theta_{i'}} \dd{E}\left[e^{-\zeta_{i}(\vc{\theta},r) (T_{s,i} \wedge 1/r)}\right] = 1, \quad i \in J_{d},
\end{align}
where $T_{e,j}$ and $T_{s,j}$ are random variables subject to the same distributions as $T_{e,j,n}$ and $T_{s,j,n}$, respectively, $p_{i,0} = 1 - \sum_{i'=1}^{d} p_{i,i'}$ and $\theta_{0} = 0$. Since the dynamics of $X(\cdot)$ under $\dd{P}$ is unchanged under $\dd{P}_{0}$ by \thr{Palm 1}, we have, by \eq{GJ-network boundary 1},
\begin{align*}
 & \dd{E}_{0}\left[\Delta_{j} f_{\vc{\theta},r}(X)(0) 1(\Delta N_{j}(0)=1)\right] = \dd{E}_{0}\left[(f_{\vc{\theta},r}(Y_{j,0}) - f_{\vc{\theta},r}(Y_{j-1,0})) 1(\Delta N_{j}(0)=1)\right]\\
 & \quad = \dd{E}_{0}\left[(f_{\vc{\theta},r}(Y_{j-1,0})[e^{\theta_{j} + \eta_{j}(\theta_{j},r) (T_{e,j,N_{j}(0)} \wedge 1/r)} - 1] 1(\Delta N_{j}(0)=1)\right]\\
 & \quad = \dd{E}_{0}\left[(f_{\vc{\theta},r}(Y_{j-1,0}) 1(\Delta N_{j}(0)=1)\right] \dd{E}\left[e^{\theta_{j} + \eta_{j}(\theta_{j},r) (T_{e,j} \wedge 1/r)} - 1\right] = 0.
\end{align*}
Hence, $\dd{E}_{j}\left[f(Y_{j,0}) - f(Y_{j-1,0})\right] = 0$ for $j \in K_{e}$ by \eq{Palm 4} of \lem{Palm 4}. Similarly, $\dd{E}_{j-d}\left[f(Y_{j,0}) - f(Y_{j-1,0})\right] = 0$ for $j \in K_{s}$. Thus, it follows from \eq{GJN-BAR 1} for $f = f_{r\vc{\theta},r}$ that
\begin{align}
\label{eq:GJN-BAR 2}
  \sum_{i \in E} \eta_{i}(r\theta_{i},r) \dd{E}\left[f_{r\vc{\theta},r}(X(0))\right] + \sum_{i \in J} \zeta_{i}(r\theta_{i},r) \dd{E}\left[f_{r\vc{\theta},r}(X(0)) 1(L_{i}(0) \ge 1)\right] = 0.
\end{align}
This is the BAR for deriving the weak limit of $r L^{(r)}$ as $r \downarrow 0$ in \cite{BravDaiMiya2017}, where $X^{(r)}\equiv (L^{(r)},R^{(r)})$ is the random vector subject to the stationary distribution of the $r$-th $GJ$-network which satisfies heavy traffic conditions as $r \downarrow 0$.

\subsection{GI/G/1 queue with a finite waiting room}
\label{sec:2L queue}

We next consider a single server queue which accepts customers only when its queue length including a customer in service is less than the threshold $\ell_{0} > 0$. In this model, customers arrive subject to a renewal process, and served in the FCFS manner. Their service times are independent and identically distributed. We refer to this queueing model as a $GI/G/1/\ell_{0}$ queue.

The $GI/G/1/\ell_{0}$ queue corresponds to a one-dimensional Brownian motion with two reflecting barriers, which is fully studied in \cite{Harr2013}. For example, the stationary distribution is derived in Proposition 6.6 of \cite{Harr2013}. We note that the $GI/G/1/\ell_{0}$ queue has slightly different behaviors on the lower and upper boundaries while those of the reflecting Brownian motion are basically the same reflecting structure. Nevertheless, it will be shown in \thr{FQ-limit 1} that the heavy traffic limit of the stationary distribution of the scaled queue size agrees with that of the reflecting Brownian motion.

This limiting distribution is closely related to that of a two node closed network which has a fixed number of customers, studied in \cite{HarrWillChen1990}, but it is different from ours because the arrival processes in a closed network are not renewal. 

We index the $GI/G/1/\ell_{0}$ queue by $r \in (0,1]$, which is also used as a scaling factor for its queue size. Since we only consider its stationary distribution, we do not scale time. For the $r$-th model, let $\ell^{(r)}_{0}$ be the threshold, and let $L^{(r)}(t)$ be the number of customers in the system at time $t$. Let $t^{(r)}_{e,n}$ be the $n$-th arrival time of a customer, and let $T^{(r)}_{s,n}$ be the service time of the $n$-th served customer. Let $T^{(r)}_{e,n} = t^{(r)}_{e,n} - t^{(r)}_{e,n-1}$, which is assumed to be independent of $\{t^{(r)}_{e,n'}; 0 \le n' \le n-1\}$, and has a common distribution. Denote a random variable subject to this distribution by $T^{(r)}_{e}$. Similarly, we assume that $\{T^{(r)}_{s,n}; n \ge 1\}$ is a sequence of $i.i.d.$ random variables, and denote a random variable subject to their common distribution by $T_{s}^{(r)}$.

We will use the following notations.
\begin{align*}
 & m^{(r)}_{e} = \dd{E}(T^{(r)}_{e}), \qquad \lambda^{(r)} = 1/m^{(r)}_{e}, \qquad \sigma^{(r)}_{e} = \dd{E}[(T^{(r)}_{e} - m^{(r)}_{e})^{2}],\\
 & m^{(r)}_{s} = \dd{E}(T^{(r)}_{s}), \qquad \mu^{(r)} = 1/m^{(r)}_{s}, \qquad \sigma^{(r)}_{s} = \dd{E}[(T^{(r)}_{s} - m^{(r)}_{s})^{2}],\\
 & \rho^{(r)} = \lambda^{(r)}/\mu^{(r)}, \qquad i = 1,2,
\end{align*}
where all of those quantities are assumed to be finite and positive.

For the $r$-th model, let $X^{(r)}(t) \equiv (L^{(r)}(t), R^{(r)}_{1}(t), R^{(r)}_{2}(t))$, and let $N^{(r)}_{1}(\cdot)$ and $N^{(r)}_{2}(\cdot)$ be the counting processes for the arrival and service completion instants of customers, respectively. Since we only consider a countable number of $r \in (0,1]$ such that $r \downarrow 0$, we can construct a stochastic basis $(\Omega,\sr{F},\dd{F},\dd{P})$ such that $X^{(r)}(\cdot)$, $N^{(r)}_{e}$ and $N^{(r)}_{s}$ are defined on a stochastic basis $(\Omega,\sr{F}^{(r)},\dd{F}^{(r)},\dd{P})$ satisfying $(\sr{F}^{(r)},\dd{F}^{(r)}) \subset (\sr{F},\dd{F})$ and the conditions (M1)--(M4) for all countable $r$'s. Obviously, $X^{(r)}(\cdot) \equiv \{X^{(r)}(t); t \ge 0\}$ is a strong Markov process with respect to $\dd{F}^{(r)}$, and $t^{(r)}_{e,n}, t^{(r)}_{s,n}$ are predictable stopping times.

In our formulation of a Markov and counting processes in \sectn{stationary}, this model has $d=1$ and $k=2$. We assume the following assumptions, in which we write $h(r) = o(r)$ for a function $h$ of variable $r$ if $\lim_{r \downarrow 0} h(r)/r = 0$.
\begin{mylist}{3}
\item [(4.a)] $\{(T^{(r)}_{e})^{2}; r \in (0,1]\}$ and $\{(T^{(r)}_{s})^{2}; r \in (0,1]\}$ are uniformly integrable.
\item [(4.b)] $m^{(r)}_{e}, \lambda^{(r)}, \sigma^{(r)}_{e}$ converge to $m_{e}, \lambda, \sigma_{e}$, respectively, as $r \downarrow 0$. Similarly, $m^{(r)}_{s}, \mu^{(r)}, \sigma^{(r)}_{s}$ converge to $m_{s}, \mu, \sigma_{s}$, respectively, as $r \downarrow 0$.
\item [(4.c)] There is a $b \in \dd{R}$ such that $\mu^{(r)} - \lambda^{(r)} = r \mu b + o(r)$ as $r \downarrow 0$. 
\item [(4.d)] $r \ell^{(r)}_{0} = \ell_{0}+ o(r)$.
\end{mylist}
Note that (4.b) and (4.c) imply $\lambda = \mu$ and $\rho \equiv \lambda/\mu = 1$. Furthermore, $1 - \rho^{(r)} = rb + o(r)$.

Assume that $X^{(r)}(\cdot)$ has the stationary distribution, and let $X^{(r)} \equiv (L^{(r)},R^{(r)}_{1},R^{(r)}_{2})$ be a random vector subject to this stationary distribution. Let $\alpha^{(r)}_{i} = \dd{E}[N^{(r)}_{i}(1)]$ for $i=1,2$ under the stationary framework. Obviously, $\alpha^{(r)}_{1} = \lambda^{(r)}$, but $\alpha^{(r)}_{2}$ is not $\mu^{(r)}$ but must be the arrival rate of customers who can get service. Hence, we only know that $\alpha^{(r)}_{2} \le \lambda^{(r)} < \infty$ at this moment, but we can define the Palm distribution $\dd{P}^{(r)}_{i}$ concerning point process $N^{(r)}_{i}$ for $i=1,2$. Then, we can see that
\begin{align}
\label{eq:alpha-2 1}
  \alpha^{(r)}_{2} & = \lim_{t \to \infty} \frac{N^{(r)}_{1}(t)}{t} \frac 1{N^{(r)}_{1}(t)} \int_{0}^{t} 1(L^{(r)}(u-) < \ell^{(r)}_{0}) N^{(r)}_{1}(du)  \nonumber\\
  & = \lambda^{(r)} \dd{P}^{(r)}_{1}(L^{(r)}(0-) < \ell^{(r)}_{0}),
\end{align}
which is not a concrete expression for $\alpha^{(r)}_{2}$, but will be used to prove \thr{FQ-limit 1} below.

We aim to derive the limiting distribution of $rL^{(r)}(t)$ as $r \downarrow 0$. We have the following answer to this problem.

\begin{theorem}\rm
\label{thr:FQ-limit 1}
For the $GI/G/1/\ell_{0}$ queue, assume that $X^{(r)}(\cdot)$ has the stationary distribution and the conditions (4.a), (4.b), (4.c) and (4.d) are satisfied, then the limiting distribution of $rL^{(r)}$ exists as $r \downarrow 0$, and has a density. Denote this density function by $g$, then we have two cases.
\begin{enumerate}[(i)]
\item For $b=0$, 
\begin{align}
\label{eq:null 1}
  \dd{P}(rL^{(r)} = 0) = \dd{P}^{(r)}_{1} (r L^{(r)}(0-) = \ell^{(r)}_{0}) = \frac 1{2\ell_{0}} (\lambda^{2} \sigma_{e}^{2} + \mu^{2} \sigma_{s}^{2})r + o(r), 
\end{align}
and $g$ is uniform on $[0,\ell_{0}]$.
\item For $b \not= 0$, let $\ds \beta = \frac {2b}{\lambda^{2}\sigma_{e}^{2} + \mu^{2}\sigma_{e}^{2}}$, then 
\begin{align}
\label{eq:FQ-lower 1}
 & \dd{P}(rL^{(r)} = 0) = \frac {be^{\beta\ell_{0}}}{e^{\beta\ell_{0}} - 1} r + o(r),\\
\label{eq:FQ-upper 1}
 & \dd{P}^{(r)}_{1} (L^{(r)}(0-) = \ell^{(r)}_{0}) = \frac {b}{e^{\beta\ell_{0}} - 1} r + o(r),
\end{align}
and $g$ is a truncated exponential function on $[0,\ell_{0}]$ with parameter $\beta$. Namely,
\begin{align}
\label{eq:FQ-stationary 2}
  g(x) = \frac {\beta}{1 - e^{-\beta \ell_{0}}} e^{-\beta x}, \qquad x \in [0,\ell_{0}],
\end{align}
where $g(x)$ is decreasing for $b > 0$, while it is increasing for $b < 0$.
\end{enumerate}
\end{theorem}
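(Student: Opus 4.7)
Following the template of \sectn{GJ-network}, I would apply the BAR \eq{stationary 2} to the finite-buffer queue with the exponentially-tilted test function
\begin{align*}
f^{(r)}_{\theta}(l,r_{1},r_{2}) = \exp\!\bigl(r\theta l - \eta^{(r)}(r_{1}\wedge r^{-1}) - \zeta^{(r)}(r_{2}\wedge r^{-1})\bigr),\qquad \theta\in\dd{R},
\end{align*}
with $\eta^{(r)},\zeta^{(r)}$ chosen as the one-dimensional analogues of \eq{GJ-network boundary 1}--\eq{GJ-network boundary 2}, i.e.\ the unique roots of
\begin{align*}
e^{r\theta}\dd{E}\bigl[e^{-\eta^{(r)} T_{e}^{(r)}}\bigr]=1,\qquad e^{-r\theta}\dd{E}\bigl[e^{-\zeta^{(r)} T_{s}^{(r)}}\bigr]=1.
\end{align*}
Here $d=1$, $k=2$, $K_{+}=\{1,2\}$, and $\sr{H}f = -\partial_{r_{1}}f - \partial_{r_{2}}f\cdot 1(l\ge 1)$; relation \eq{alpha-2 1} identifies the intensity $\alpha_{2}^{(r)}$. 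By construction, the accepted-arrival Palm contribution vanishes on $\{L(0-)<\ell_{0}^{(r)}\}$ and the service-completion Palm contribution vanishes identically (since $L(0-)\ge 1$ a.s.\ under $\dd{P}_{2}^{(r)}$), leaving only the rejected-arrival term on $\{L(0-)=\ell_{0}^{(r)}\}$.

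\textbf{Asymptotic BAR.} Implicit Taylor expansion of the two defining equations, combined with (4.b)--(4.c) and the resulting $\lambda=\mu$, yields
\begin{align*}
\eta^{(r)}+\zeta^{(r)} = r^{2}\theta\mu\!\left(\tfrac{1}{2}(\lambda^{2}\sigma_{e}^{2}+\mu^{2}\sigma_{s}^{2})\theta - b\right) + o(r^{2}),\qquad \zeta^{(r)} = -r\theta\mu + O(r^{2}).
\end{align*}
Using $\eta f+\zeta f\,1(L\ge 1) = (\eta+\zeta) f - \zeta f\,1(L=0)$, the BAR reduces to
\begin{align*}
(\eta^{(r)}{+}\zeta^{(r)})\dd{E}[f^{(r)}_{\theta}] - \zeta^{(r)}\dd{E}[f^{(r)}_{\theta}1(L{=}0)] + \alpha_{1}^{(r)}(e^{-r\theta}{-}1)e^{r\theta\ell_{0}^{(r)}}\dd{E}_{1}^{(r)}\!\bigl[e^{-\zeta^{(r)}R_{2}}1(L(0-){=}\ell_{0}^{(r)})\bigr] = 0.
\end{align*}
Dividing by $r^{2}$ and letting $r\downarrow 0$---under the working assumptions that $rL^{(r)}\Rightarrow Y$ so that $\psi(\theta):=\lim\dd{E}[e^{r\theta L^{(r)}}]=\dd{E}[e^{\theta Y}]$ exists, and that $\dd{P}(L^{(r)}{=}0)=p_{0}r+o(r)$, $\dd{P}_{1}^{(r)}(L^{(r)}(0-){=}\ell_{0}^{(r)})=p_{\ell_{0}}r+o(r)$---produces the functional equation
\begin{align*}
\tfrac{1}{2}(\lambda^{2}\sigma_{e}^{2}+\mu^{2}\sigma_{s}^{2})(\theta-\beta)\psi(\theta) = e^{\theta\ell_{0}}p_{\ell_{0}} - p_{0}
\end{align*}
for the $\beta$ of the statement. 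Since $Y$ has compact support $[0,\ell_{0}]$, $\psi$ is entire; hence the right-hand side must vanish at $\theta=\beta$, forcing $p_{0}=e^{\beta\ell_{0}}p_{\ell_{0}}$ when $\beta\ne 0$ and $p_{0}=p_{\ell_{0}}$ when $\beta=0$. The normalization $\psi(0)=1$ then pins down both constants, recovering \eq{FQ-lower 1}--\eq{FQ-upper 1} respectively \eq{null 1}. A direct calculation shows that the resulting $\psi(\theta) = \beta(e^{\theta\ell_{0}}-e^{\beta\ell_{0}})/\bigl((e^{\beta\ell_{0}}-1)(\theta-\beta)\bigr)$ is precisely the MGF of the truncated exponential density \eq{FQ-stationary 2}; the uniform density in case (i) is the $\beta\to 0$ limit.

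\textbf{Main obstacle.} All the analytic content sits in justifying the three limits taken on faith above. (a) Tightness of $\{rL^{(r)}\}$ is immediate from the compact support $[0,r\ell_{0}^{(r)}]$ and (4.d). (b) The replacement of $e^{-\eta^{(r)} R_{1}^{(r)}}$ and $e^{-\zeta^{(r)} R_{2}^{(r)}}$ by $1$ inside $\dd{E}[f^{(r)}_{\theta}]$ and $\dd{E}_{1}^{(r)}[\cdot\,1(L(0-){=}\ell_{0}^{(r)})]$ uses $\eta^{(r)},\zeta^{(r)}=O(r)$ together with (4.a)--(4.b) to bound the stationary and Palm-stationary residual-life distributions uniformly in $r$---this is where the spread-out assumption on $T_{e}^{(r)}$ is invoked to ensure well-behaved residuals. (c) The order-$r$ scaling of the two boundary probabilities is forced \emph{a posteriori} from the BAR itself: any faster decay would make both $p_{0}$ and $p_{\ell_{0}}$ vanish, and the functional equation would then force $\psi\equiv 0$, contradicting $\psi(0)=1$. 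Existence of the actual positive limits (not merely $O(r)$ bounds) is obtained by a subsequential argument applied to the reduced BAR at each rational $\theta$, together with uniqueness of the solution $\psi$ of the functional equation once $\psi(0)=1$ and analyticity are imposed.
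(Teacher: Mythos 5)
Your approach is essentially correct and arrives at the same functional equation as the paper, but you replace one of the paper's key steps by an equivalent-in-the-limit argument. Where you diverge: the paper applies the BAR a second time with the elementary test function $f(X^{(r)})=R_{2}^{(r)}$, which, combined with \eq{alpha-2 1}, yields the exact identity \eq{alpha 3}, namely $1-\rho^{(r)}=\dd{P}(L^{(r)}{=}0)-\rho^{(r)}\dd{P}_{1}^{(r)}(L^{(r)}(0-){=}\ell_{0}^{(r)})$. Paired with the exponential BAR evaluated at $\theta=\beta$ (your ``vanishing at $\theta=\beta$'' step), this gives the $O(r)$ asymptotics of the two boundary probabilities \emph{before} any limit is taken, after which the MGF limit \eq{FQ-limit 2} follows directly. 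You instead extract the second constraint from normalization $\psi(0)=1$ of the limiting MGF; this is indeed the $r\downarrow 0$ shadow of \eq{alpha 3}, so the algebra is identical, but it makes the whole argument rest on first establishing that $\dd{P}(L^{(r)}{=}0)/r$ and $\dd{P}_{1}^{(r)}(\cdot)/r$ have limits, which you defer to a subsequential argument. That argument does go through (the $O(r)$ bound can be obtained by comparing the BAR at two values of $\theta$, tightness is free from the bounded support, and uniqueness of the functional-equation solution then forces full convergence), but the paper's $R_{2}$-BAR shortcut is cleaner and delivers the explicit rates \eq{null 1}, \eq{FQ-lower 1}, \eq{FQ-upper 1} without any compactness detour. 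Two small inaccuracies worth flagging: the defining equations for $\eta^{(r)},\zeta^{(r)}$ must carry the truncation $T_{e}^{(r)}\wedge 1/r$, $T_{s}^{(r)}\wedge 1/r$ \emph{inside} the expectation (as in \eq{GJ-network boundary 1}--\eq{GJ-network boundary 2}), which you omit but is what makes the bound \eq{FQ-eta-zeta 1} and the dominated-convergence step \eq{FQ-limit 1} work; and the service-completion Palm term vanishes because of the design of $\zeta^{(r)}$, not because $L(0-)\ge 1$ under $\dd{P}_{2}^{(r)}$ (that latter fact only ensures the state-change formula is the right one).
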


\begin{remark}\rm
\label{rem:FQ-limit 1}
Since $L^{(r)}(t)$ is bounded by $\ell^{(r)}_{0}$, $X^{(r)}(\cdot)$ has a unique stationary distribution under a mild regularity condition such that the distribution of $T_{e}$ is spread out (e.g., see \cite{Asmu2003} for this condition and \cite{MiyaMoro2023} for the stability).
\pend
\end{remark}

\begin{remark}\rm
\label{rem:FQ-limit 2}
The distributions obtained in (i) and (ii) agree with those of the corresponding reflecting Brownian process \cite[Proposition 6.6]{Harr2013}. If this Brownian process is obtained as the weak limit of the sequence of $rX^{(r)}(r^{-2}t)$ as $r \downarrow 0$, so called a process limit in diffusion scaling, then (i) and (ii) are immediate from the tightness of the sequence of the stationary distributions because their supports are uniformly bounded. Thus, (i) and (ii) may not be new results. However, the present approach skips the derivation of the process limit. Furthermore, it derives the finer asymptotic results, \eq{null 1}, \eq{FQ-lower 1} and \eq{FQ-upper 1}, which may not be obtained through the process limit. These are the advantages of the BAR approach.
\pend
\end{remark}

\begin{proof}
We first derive a BAR for this queueing model. Our first job is to find a good test function for the BAR \eq{stationary 2}. Similar to the $GJ$-network case (see \eq{GJN-test-f 1}), we take the following test function $f_{\theta,r}$ parametrized by $\theta \in \dd{R}$ and $r \in (0,1]$.
\begin{align*}
  f_{\theta,r}(X^{(r)}) = e^{\theta L^{(r)} - \eta^{(r)}(\theta) (R^{(r)}_{1} \wedge 1/r) - \xi^{(r)}(\theta) (R^{(r)}_{2} \wedge 1/r)},
\end{align*}
where $\eta^{(r)}(\theta), \xi^{(r)}(\theta)$ are uniquely determined by
\begin{align*}
  e^{\theta} \dd{E}(e^{- \eta^{(r)}(\theta) (T^{(r)}_{e} \wedge 1/r)}) = 1, \qquad e^{-\theta} \dd{E}(e^{- \xi^{(r)}(\theta) (T^{(r)}_{s} \wedge 1/r)}) = 1.
\end{align*}
We will use the test function $f_{r\theta,r}$, replacing $\theta$ by $r\theta$. As shown in \cite{BravDaiMiya2017} (see also \cite{BravDaiMiya2023}), $\eta^{(r)}(r\theta)$ and $\xi^{(r)}(r\theta)$ have the following asymptotic expansions as $r \downarrow 0$.
\begin{align}
\label{eq:}
 & \eta^{(r)}(r\theta) = \lambda^{(r)} r\theta + \frac 12 \lambda^{3} \sigma_{e}^{2} r^{2}\theta^{2} + o(r^{2}),\\
 & \xi^{(r)}(r\theta) = - \mu^{(r)} r\theta + \frac 12 \mu^{3} \sigma_{s}^{2} r^{2} \theta^{2} + o(r^{2}).
\end{align}
Furthermore, by Lemma 5.8 of \cite{BravDaiMiya2023}, there are constant $d_{e}, d_{s}, a > 0$ such that, for $r \in (0,1]$ and $\theta_{i} \in \dd{R}$ satisfying $r|\theta_{i}| < a$,
\begin{align}
\label{eq:FQ-eta-zeta 1}
  \left|\eta^{(r)}(r\theta) (u_{1} \wedge 1/r) + \zeta^{(r)}(r\theta) (u_{2} \wedge 1/r)\right| \le |\theta| \left(d_{e} (r u_{1} \wedge 1) + d_{s} (ru_{2} \wedge 1)\right).
\end{align}

Then, we can see from this and $r L^{(r)} \le r \ell^{(r)}_{0} \le \ell_{0} + o(r)$ that there is $r_{0} \in (0,1]$ for each $\theta \in \dd{R}$ such that
\begin{align}
\label{eq:FQ-finite 0}
 & \sup_{r (0,r_{0}]} f_{r\theta,r}(0,R^{(r)}) \le \max_{i=1,2} e^{|\theta_{i}|(d_{e,i} + d_{s})},\\
\label{eq:FQ-finite 1}
 & \sup_{r \in (0,r_{0}]} f_{r\theta,r}(L^{(r)},0,0) < \infty, \qquad \sup_{r \in (0,r_{0}]} f_{r\theta,r}(L^{(r)},R^{(r)}) < \infty.
\end{align}
Hence, $\dd{E} \left[f_{r\theta,r}(L^{(r)},R^{(r)})\right]$ and $\dd{E}^{(r)}_{i} \left[f_{r\theta,r}(L^{(r)},R^{(r)})\right]$ for $i=1,2$ are well defined, and
\begin{align}
\label{eq:FQ-limit 1}
  \lim_{r \downarrow 0} \dd{E} \left[f_{r\theta,r}(0,R^{(r)})\right] = 1, \qquad \lim_{r \downarrow 0} \dd{E}^{(r)}_{i}\left[f_{r\theta,r}(0,R^{(r)})\right] = 1, \qquad \theta \in \dd{R},
\end{align}
by the dominated convergence theorem, where recall that $\dd{P}^{(r)}_{i}$ is the Palm distribution for $i=1,2$, and $\dd{E}^{(r)}_{i}$ is expectations concerning $\dd{P}^{(r)}_{i}$. These facts legitimate computation below.

Let us compute the BAR \eq{stationary 2} for $f = f_{r\theta,r}$. Since $\dd{E}_{1}[\Delta f_{r\theta,r}(X^{(r)}(0)) 1(L^{(r)}(0-) \not= \ell^{(r)}_{0})] = 0$,
\begin{align*}
  \dd{E}^{(r)}_{1}[\Delta f_{r\theta,r}(X^{(r)}(0))] & = (e^{-r\theta} - 1) \dd{E}^{(r)}_{1}[f_{r\theta,r}(L^{(r)}(0-),R^{(r)}(0-)) 1(L^{(r)}(0-) = \ell^{(r)}_{0})] \nonumber\\
  & = (e^{-r\theta} - 1) e^{\theta \ell_{0}} \dd{E}^{(r)}_{1}[e^{-\xi^{(r)}(r\theta) (R^{(r)}_{2}(0-) \wedge 1/r)} 1(L^{(r)}(0-) = \ell^{(r)}_{0})]  \nonumber\\
  & = (e^{-r\theta} - 1) e^{\theta \ell_{0}} \dd{E}^{(r)}_{1}[1(L^{(r)}(0-) = \ell^{(r)}_{0}) f_{r\theta,r}(0,0,R^{(r)}_{2}(0-)) ],
\end{align*}
and $\dd{E}^{(r)}_{2}[\Delta f_{r\theta,r}(X^{(r)}(0))] = 0$. Furthermore, $(R^{(r)}_{1})'(t) = - 1$ and $(R^{(r)}_{2})'(t) = - 1(L^{(r)}(t) \ge 1)$. Hence, \eq{stationary 2} yields
\begin{align}
\label{eq:BAR 1}
 & \left(-\mu b r^{2} \theta + \frac 12 (\lambda^{3} \sigma_{e}^{2} + \mu^{3} \sigma_{s}^{2}) r^{2}\theta^{2} + o(r^{2}) \right) \dd{E}[e^{\theta r L^{(r)}}f_{r\theta,r}(0,R^{(r)})] \nonumber\\
 & \qquad + \left(\mu r\theta - \frac 12 \mu^{3} \sigma_{s}^{2} r^{2}\theta^{2} + o(r^{2})\right) \dd{E}[ 1(L^{(r)} = 0) f_{r\theta,r}(0,R^{(r)})] \nonumber\\
 & \qquad - (r\theta +O(r^{2})) e^{r\theta \ell^{(r)}_{0}} \lambda^{(r)} \dd{E}^{(r)}_{1}[1(L^{(r)}(0-) = \ell^{(r)}_{0}) f_{r\theta,r}(0,0,R^{(r)}_{2}(0-)) ] = 0.
\end{align}
This is an asymptotic BAR for our analysis. In this formula, $f_{r\theta,r}(0,R^{(r)})$ under $\dd{E}$ and $\dd{E}^{(r)}_{1}$ can be replaced by constant $1 + o(1)$ because of \eq{FQ-limit 1}. However, $\dd{P}(L^{(r)} = 0)$ and $\dd{P}^{(r)}_{1}(L^{(r)}(0-) = \ell^{(r)}_{0})$ are unknown, so we need to see their asymptotic behavior.

Recall the definition of $\beta$ in (ii) and note that $\theta = \beta$ is the solution of the equation $- \mu b + \frac 12 (\lambda^{3} \sigma_{e}^{2} + \mu^{3} \sigma_{s}^{2}) \theta = 0$. Since $\lambda/\mu = 1$, \eq{BAR 1} with $\theta = \beta$ yields
\begin{align}
\label{eq:beta 2}
  \dd{P}(L^{(r)}=0) = (e^{\beta \ell_{0}} + O(r)) \rho^{(r)} \dd{P}^{(r)}_{1} (L^{(r)}(0-) = \ell^{(r)}_{0})) + o(r),
\end{align}
where $h(r) = O(r)$ if $\limsup_{r \downarrow 0} |h(r)|/r$ is finite for a function $h$ with variable $r$. We next apply $f(X^{(r)}(t)) = R^{(r)}_{2}(t)$ to \eq{stationary 2}, then
\begin{align*}
 & - \dd{P}(L^{(r)} > 0) + \alpha^{(r)}_{2} \dd{E}^{(r)}_{2} [T^{(r)}_{s}] = 0.
\end{align*}
By \eq{alpha-2 1} and $\dd{E}^{(r)}_{2} [T^{(r)}_{s} ] = (\mu^{(r)})^{-1}$, this formula yields
\begin{align*}
  \lambda^{(r)} (1 - \dd{P}^{(r)}_{1} (L^{(r)}(0-) = \ell^{(r)}_{0})) = \alpha^{(r)}_{2} = \mu^{(r)} (1-\dd{P}(L^{(r)}=0)).
\end{align*}
Hence, 
\begin{align}
\label{eq:alpha 3}
  1 - \rho^{(r)} = \dd{P}(L^{(r)}=0) - \rho^{(r)} \dd{P}^{(r)}_{1} (L^{(r)}(0-) = \ell^{(r)}_{0}).
\end{align}

Define functions $\varphi^{(r)}$ and $\psi^{(r)}$ as
\begin{align*}
  \varphi^{(r)}(\theta) = \dd{E}[e^{\theta r L^{(r)}}], \qquad \psi^{(r)}(\theta) = \dd{E}[e^{\theta r L^{(r)}}f_{r\theta,r}(0,R^{(r)})], \qquad \theta \in \dd{R}, 
\end{align*}
then, by \eq{FQ-finite 1} and \eq{FQ-limit 1}, 
\begin{align}
\label{eq:FQ-finite 2}
  \lim_{r \downarrow 0} |\varphi^{(r)}(\theta) - \psi^{(r)}(\theta)| = 0, \qquad \theta \in \dd{R}.
\end{align}
Hence, if the limit of $\psi^{(r)}(\theta)$ is obtained as $r \downarrow 0$, then $\varphi^{(r)}(\theta)$ has the same limit. Denote this limit by $\widetilde{\varphi}(\theta)$ if it exists. If we can show that $\widetilde{\varphi}(\theta)$ is the moment generating function of a probability distribution on $\dd{R}_{+}$, then, denoting this distribution by $\widetilde{\nu}$, it can is shown that the distribution of $rL^{(r)}$ weakly converges to $\widetilde{\nu}$ as $r \downarrow 0$ by the weak convergence of a family of distributions under equicontinuity (e.g., see Lemma 5.2 of \cite{Kall2001}). 

To compute $\widetilde{\varphi}(\theta)$, we separately consider the asymptotic BAR \eq{BAR 1} for $b = 0$ and $b \not= 0$. We first assume that $b = 0$.  In this case, it follows from \eq{BAR 1} and \eq{alpha 3} that
\begin{align}
\label{eq:FQ-BAR 4}
  \frac {\lambda^{2} (\sigma_{e}^{2} + \sigma_{s}^{2})}{2} r \varphi^{(r)}(\theta) + \frac {1-e^{r\theta \ell^{(r)}_{0}}}{\theta} \dd{P}(L^{(r)}=0) = o(r).
\end{align}
Since this formula uniformly holds over $r \in (0,1]$ for $\theta$ such that $|r \theta| < a$ for each finite $a$, letting $\theta \to 0$, we have
\begin{align*}
   \frac {\lambda^{2} (\sigma_{e}^{2} + \sigma_{s}^{2})}{2} r = r \ell^{(r)}_{0} \dd{P}(L^{(r)}=0) + o(r).
\end{align*}
Hence, we have \eq{null 1}, and therefore, it follows from \eq{FQ-finite 2} and \eq{FQ-BAR 4} that
\begin{align}
\label{eq:FQ-limit 3}
  \lim_{r \downarrow 0} \varphi^{(r)}(\theta) = \lim_{r \downarrow 0} \psi^{(r)}(\theta) = \frac {e^{\theta \ell_{0}} - 1}{\ell_{0}},
\end{align}
which shows that $\widetilde{\nu}$ is the uniform distribution on $[0,\ell_{0}]$. Thus, (i) is proved.

We next assume that $b \not= 0$. Substituting $\dd{P}(L^{(r)}=0)$ of \eq{beta 2} into \eq{alpha 3}, it follows from $1 - \rho^{(r)} = rb + o(r)$ that
\begin{align*}
  br = (e^{\beta \ell_{0}} + O(r)) \rho^{(r)} \dd{P}^{(r)}_{1} (L^{(r)}(0-) = \ell^{(r)}_{0})) - \rho^{(r)} \dd{P}^{(r)}_{1} (L^{(r)}(0-) = \ell^{(r)}_{0}) + o(r)
\end{align*}
yields
\begin{align}
\label{eq:BAR 3}
  (e^{\beta \ell_{0}} - 1 + O(r)) \dd{P}^{(r)}_{1} (L^{(r)}(0-) = \ell^{(r)}_{0})) = br + o(r).
\end{align}
From this and substituting this into \eq{beta 2}, we have
\begin{align}
\label{eq:upper 1}
 & \dd{P}^{(r)}_{1} (L^{(r)}(0-) = \ell^{(r)}_{0}) = \frac {b}{e^{\beta\ell_{0}} - 1} r + o(r),\\
\label{eq:lower 1}
 & \dd{P}(L^{(r)} = 0) = \frac {be^{\beta\ell_{0}}}{e^{\beta\ell_{0}} - 1} r + o(r).
\end{align}

Then, substituting \eq{upper 1} and \eq{lower 1} into \eq{BAR 1} and letting $r \downarrow 0$ after dividing by $\theta r^{2}$, we have, for $\theta \not= \beta$,
\begin{align}
\label{eq:FQ-limit 2}
  \lim_{r \downarrow 0} \psi^{(r)}(\theta) = \lim_{r \downarrow 0} \frac {\dd{P}(L^{(r)}=0) - e^{\theta \ell_{0}} \dd{P}^{(r)}_{1}[L^{(r)}(0-)=\ell^{(r)}_{0})} {(\beta - \theta) \lambda^{2} (\sigma_{e}^{2} + \sigma_{s}^{2})/2} = \frac {(e^{\beta \ell_{0}} - e^{\theta \ell_{0}}) \beta} {(e^{\beta \ell_{0}} - 1)(\beta - \theta)}.
\end{align}
Hence, 
\begin{align*}
  \widetilde{\varphi}(\theta) = \frac {(e^{\beta \ell_{0}} - e^{\theta \ell_{0}}) \beta} {(e^{\beta \ell_{0}} - 1)(\beta - \theta)}, \qquad \theta \not= \beta.
\end{align*}
Obviously, this is the moment generating function of the distribution whose density is $g$ of \eq{FQ-stationary 2}. Thus, the proof of (ii) is completed.
\end{proof}

\subsection*{Acknowledgements}

The author thanks two anonymous referees for their helpful comments and suggestions. This paper came out from discussions with Jim Dai on \cite{BravDaiMiya2023}. I am grateful to Jim Dai for stimulating discussions. This paper is written based on my talk at the conference, 40 years of reflected Brownian motion and related topics, in France, April, 2023. The author thanks for the organizer of this conference.



\end{document}